\theoremstyle{thmstyleone}%
\newtheorem{theorem}{Theorem}
\theoremstyle{thmstyletwo}%
\newtheorem{lemma}{Lemma}
\theoremstyle{thmstylethree}%
\newtheorem{definition}{Definition}%
\numberwithin{equation}{theorem}
\begin{document}

\title[A NEW SIR MODEL WITH MOBILITY]{FORMULATION  OF A NEW SIR MODEL WITH NON-LOCAL MOBILITY}


\author[1]{\fnm{Ciana} \sur{Applegate}}\email{ciana.applegate@eku.edu}

\author[2]{\fnm{Jiaxu} \sur{Li}}\email{jiaxu.li@louisville.edu}
\equalcont{These authors contributed equally to this work.}

\author*[2]{\fnm{Dan} \sur{Han}}\email{dan.han@louisville.edu}
\equalcont{These authors contributed equally to this work.}

\affil*[1]{\orgdiv{Department of Mathematics and Statistics}, \orgname{Eastern Kentucky University}, \orgaddress{\street{521 Lancaster Ave}, \city{Richmond}, \postcode{40475}, \state{Kentucky}, \country{United States}}}

\affil[2]{\orgdiv{Department of Mathematics}, \orgname{University of Louisville}, \orgaddress{\street{2301 South 3rd Street}, \city{Louisville}, \postcode{40202}, \state{Kentucky}, \country{United States}}}


\abstract{In this manuscript, we develop a mobility-based Susceptible-Infectious-Recovered (SIR) model to elucidate the dynamics of pandemic propagation. While traditional SIR models within the field of epidemiology aptly characterize transitions among susceptible, infected, and recovered states, they typically neglect the inherent spatial mobility of particles. To address this limitation, we introduce a novel dynamical SIR model that incorporates nonlocal spatial motion for three distinct particle types, thereby bridging the gap between epidemiological theory and real-world mobility patterns. This paper primarily focuses on analyzing the long-term behavior of this dynamic system, with specific emphasis on the computation of first and second moments. We propose a new reproduction number $R_0^m$ and compare it with the classical reproduction number $R_0$ in the traditional SIR model. Furthermore, we rigorously examine the phenomenon of intermittency within the context of this enhanced SIR model. The results contribute to a more comprehensive understanding of pandemic spread dynamics, considering both the interplay between disease transmission and population mobility and the impact of spatial motion on the system's behavior over time.}

\keywords{SIR ; SIR model ; 
Mobility ; Epidemic modeling ; Dynamic spatial modeling.}



\maketitle

\section{Introduction}\label{sec1}

The susceptible-infectious-recovered (SIR) model, initially developed by Ronald Ross, William Hamer, and others in the early twentieth century, has been a cornerstone in the mathematical modeling of infectious diseases (\citep{24}). This classical model tracks the sizes of three distinct sub-populations over time: the susceptible population $S(t)$, the infected population $I(t)$, and the recovered population $R(t)$. The traditional model usually consists of three non-spatial coupled nonlinear ordinary differential equations:
\begin{align*}
\displaystyle \frac{dS(t)}{dt} &= - \beta S(t)I(t),\\
\displaystyle \frac{dI(t)}{dt} &= \beta S(t)I(t) - \gamma I(t),\\
\displaystyle \frac{dR(t)}{dt} &= \gamma I(t).
\end{align*} 
where $\beta$ represents the infection rate—the average number of susceptible individuals infected by one infectious individual per contact per unit of time—and $\gamma$ represents the recovery rate—the average number of infected individuals who recover per unit of time. For decades, the deterministic SIR model has served as a foundational framework in epidemiology, with researchers extending it to include additional compartments, leading to models such as the SI, SIS, SEIR, and SEIS frameworks. While these compartmental models have proven useful for understanding large-scale disease dynamics, they often fail to capture the inherent randomness and uncertainty present in the real-world spread of pandemics. In response to this limitation, Linda Allen and colleagues (\citep{allen1994,allen2008}) made  contributions by developing stochastic variants of the SI, SIR, and SIS models, both in discrete and continuous time. These stochastic models employed branching approximation theory and probability-generating functions (pgfs) to analyze long-term behavior near the disease-free equilibrium. One of the key insights from their work was the relationship between the basic reproductive number \(R_0\) and disease persistence. In deterministic models, \(R_0\) governs whether a disease will fade or persist: if \(R_0 \leq 1\), the disease will eventually die out, while if \(R_0 > 1\), it will continue to spread. However, despite these advances, these stochastic models did not account for the role of mobility in disease transmission  (\citep{allen1994,allen2008,keeling2007modeling}), leaving the extremely important aspect of real-world epidemics unexplored. While several extensions of the classical SIR model have incorporated mobility based on the original framework (\citep{gai2020localized, chinviriyasit2010numerical, centres2024diffusion}), these models often rely on simplifying assumptions—such as describing mobility with a single constant intensity parameter or using Brownian motion as the spreading mechanism—that fail to capture the complexities of real-world human movement. Consequently, there is a need for a more comprehensive integration of mobility processes into epidemic models. This paper addresses this gap by introducing the operator $\mathcal{L}$, which accommodates both local and nonlocal movement patterns, providing a more flexible and realistic framework for simulating the spread of infectious diseases.

The deterministic and stochastic SIR-type models have necessitated numerous extensions and adaptations to capture the complexities of real-world disease dynamics. Researchers have developed a wide range of SIR model variants, each tailored to address specific epidemiological challenges and methodologies, as summarized in Table \ref{tab:variants of SIR}. These models span from basic compartmental frameworks to more sophisticated approaches that integrate agent-based simulations, network-based interactions, stochastic processes, spatial dynamics, and Bayesian methods. Within these categories, various extensions have been introduced, including the addition of new compartments, social structures, and multiple disease strains etc. These modifications allow for a more accurate representation of diverse biological conditions and environmental factors, providing a more comprehensive understanding of disease dynamics across different contexts. Furthermore, hybrid models have emerged, combining features from multiple modeling approaches to provide a more comprehensive understanding of disease dynamics under varying conditions (\citep{perez2009agent, wu2018hybrid, strano2017hybrid, zhao2019hybrid, small2015stochastic, pastor2014hybrid, white2016hybrid}).

\begin{table}[h]
\centering
\resizebox{\textwidth}{!}{
\begin{tabular}{|p{3cm}|p{3cm}|p{3cm}|p{3cm}|}
\hline
\textbf{Model Type} & \textbf{Key Features} & \textbf{Use Cases} & \textbf{Limitations} \\ \hline
\textbf{Compartmental Models\footnotemark[1]} & Simple, well-defined transitions between states & Large-scale population dynamics & Assumes homogeneity within compartments \\  \hline
\textbf{Stochastic Models\footnotemark[2]} & Incorporates randomness, simulates multiple scenarios & Small populations, early outbreak stages & Computationally intensive, requires a large number of simulations \\ \hline
\textbf{Network/Contact Models\footnotemark[3]} & Models disease spread through specific contact networks & Diseases spread through specific contact patterns & Requires detailed contact data, static contact network \\ \hline
\textbf{Bayesian Models\footnotemark[4]} & Integrates prior knowledge, probabilistic predictions & Model regional effects and time evolution & Sensitivity to priors, no mobility \\ \hline
\textbf{Agent-Based Models\footnotemark[5]} & Individual-level simulation, heterogeneous agents & Complex systems, social interactions & Computationally expensive, difficult to calibrate \\ \hline
\end{tabular}}
\caption{Comparison of epidemiological SIR models}
\label{tab:variants of SIR}
\end{table}
\footnotetext[1]{\citep{kermack1927contribution, anderson1991infectious}}

\footnotetext[2]{\citep{allen2008introduction, keeling2007modeling,allen1994}}

\footnotetext[3]{\citep{Liggett2013,pastor2001epidemic, newman2002spread}}

\footnotetext[4]{\citep{jewell2009bayesian, oneill1999bayesian}}

\footnotetext[5]{\citep{epstein2009modeling, bonabeau2002agent}}

Despite the advances in epidemic modeling, relatively few models have successfully integrated realistic spatial random motions of populations. One common method for incorporating mobility into SIR models is through random diffusive processes. Most research in this area utilizes either Brownian motion driven by the local Laplacian operator $\Delta f(x)=\displaystyle\frac{1}{2}\sum\limits_{i=1}^d \frac{\partial^2 f}{\partial x_i^2}$ in $R^d$ (\citep{centres2024diffusion, faranda2020modeling}) or discrete simple random walks (\citep{hisi2019role}). The primary advantage of these diffusion-based approaches lies in their ability to capture the inherent randomness of movement, providing a more accurate representation of real-world variability in human or animal mobility compared to fixed mobility intensity models. However, their key limitation is their local spatial spreading focus: both Brownian motion and simple random walks mainly model short-range movements to the nearest neighborhoods and fail to account for long-distance travel or nonlocal interactions  In modern contexts, geographic distance does not necessarily correspond to effective distance due to advanced transportation systems; distant locations can function as immediate neighbors because a single location may be connected to multiple others through flight connections or other transport networks. Consequently, movement between locations can be viewed as a diffusion process with weighted transitions, where different probabilities are assigned to connections between locations in a high-dimensional space. This complex connectivity necessitates building more general approaches that account for both spatial geographic local interactions and non-local interactions due to transportation networks like flights, often requiring higher-dimensional spaces like $Z^d$ and $R^d$. As a result, these traditional diffusion models may not fully capture the complexity of mobility in scenarios where long-distance travel or migration plays a critical role in disease spread, as highlighted by \citep{colizza2007modeling, citron2021comparing,balcan2009multiscale}. 

The second common approach to modeling mobility in SIR frameworks is to introduce a mobility intensity parameter or a constant mobility matrix to account for individual movement in one whole system or between two or multiple ($n$) patches.  This approach has been explored in deterministic ODE models, stochastic models, and network models (\citep{wang2003threshold,ahmed2012modeling,chen2014transmission,xie2023contact,ding2012global,bichara2015sis,akuno2023multi,nipa2020disease,abhishek2021epidemic}). To describe the long-distance mobility, these models typically adopt a multi-patch framework, where individuals within each patch follow local SIR dynamics and move between patches based on a class-dependent continuous-time Markov chain. Although this framework facilitates long-distance mobility between different regions or patches, it actually does not involve with respect to spatial distance and generally assumes homogeneous movement among the population and between patches, with a fixed mobility intensity or constant mobility matrix, which oversimplifies the variability and randomness inherent in real human mobility.

To better capture long-distance movement, \citep{centres2024diffusion} proposed an agent-based model where susceptible individuals become infected if they are located within a cutoff radius of an infected individual. This model aims to represent spatial proximity as a key factor in transmission, reflecting how localized and non-localized interactions contribute to the spread of diseases. However, determining an appropriate cutoff radius in real-world scenarios is not straightforward. In practice, human movement is often influenced by social, economic, and environmental factors, leading to irregular patterns that cannot be easily captured by a fixed radius. 

To better explore the spatial space, \citep{paoluzzi2021single} employed an agent-based SIR model on a lattice $Z^2$, where agents' mobility is governed by L\'{e}vy random walks driven by the fractional Laplacian $(-\Delta)^{\beta/2}$ governed by the mobility intensity parameter $0<\beta<2$, allowing for both short-range interactions and occasional long-distance jumps. $\beta$ controls the probability distribution of the step length. As $\beta\rightarrow 2$, the process behaves like Brownian motion, and as $\beta$ decreases to $1$, the probability of long jump increases. While the use of L\'{e}vy flights provides a more flexible and realistic representation of mobility patterns, the fixed mobility intensity $\beta$ limits flexibility. It treats interactions equally in all directions (symmetric) and assumes that these interactions are uniform across the entire space (homogeneous). It enforces a fixed power-law decay of interactions over distance, which may not capture direction-dependent movement or varying mobility patterns shaped by geographic, social, or individual factors.

This motivates us to explore a new nonlocal operator that can more generally simulate movement patterns, regional interactions, and spatial heterogeneity in disease spread. \citep{19} studied a continuous-time branching random walk generated by a nonlocal Laplacian operator on the multidimensional lattice $Z^d$. Their work (\citep{19,25,28}) focused on the asymptotic behavior of particles, using spectral theory to analyze moments for the evolutionary operator. However, their models considered only single-type branching random walks. In this paper, we incorporate this spatial migration mechanism into an epidemic SIR model and formulate a novel model with nonlocal spatial mobility.

In this paper, Section 2 gives a description of the new SIR model with nonlocal spatial mobility. Section 3 consists of the derivation of the stochastic differential equations of the first moments (expectations) of the susceptible, infected and recovered population groups. The section also contains the solutions, and the analysis of the behavior of the first moments of the three groups. This is used to find the expected values of the susceptible, infected, and recovered groups. Section 4 focuses on and analyzes the second moments of the more interesting infected group, which relates to the variance. Section 5 contains the analysis of the intermittency phenomenon and the effected of clusterization of the infected population.

\section{Formulation of SIR model with mobility}

In this paper, a new SIR model with spatial mobility on $\mathbb{Z}^d$ space is introduced. The new model with migration has the assumption that all of the particles can have spatial motion, and transitions among the susceptible, infected, and recovered groups. We assume that $N(t,x) = S(t,x)+ I(t,x)+R(t,x)$, where $N(t,x)$ is the total population at position $x$ at time $t$, is varying. We define $\kappa$ as the probability that a particle will migrate. We define $\beta$ as the transition rate from the susceptible group $S$ to the infected group $I$, and $\gamma$ is the transition rate from infected group $I$ to recovered group $R$. Regarding the migration direction, it is determined by the probability intensity $a(z)$. One particle moving from location $x$ to location $x+z$ has probability $\kappa a(z)dt$ during the infinitesimal time period $(t, t+dt)$. Assume that $a(z)$ satisfies the following conditions:\\
\indent\quad \quad \textcircled{1} $a(z) = a(-z)$, \quad \textcircled{2} $\sum\limits_{z \in \mathbb{Z}^d} a(z)=0$, \quad and \quad \textcircled{3} $\sum\limits_{z \neq 0} a(z)=1$, \quad \\
which implies that $a(0)=-1$. Additionally, we assume that the spatial motion of healthy particles is the same as the spatial motion of an infected particle, and that only one event can happen during an infinitesimally short time period $(t, t+dt)$. In other words, a particle can either jump to another location or they can transmit states. The possible events are:
\begin{enumerate}[(i)]
\item $(S,x) \rightarrow (S,x+z)$ with probability $\kappa a(z)dt, \forall x,z \in \mathbb{Z}^{d}$.\\ 
This is the event that in an infinitesimally short time period $(t,t+dt)$, a particle at location $x$ moves to location $x+z$ within the susceptible group.\\
\item $(S,x+z) \rightarrow (S,x)$ with probability $\kappa a(-z)dt, \forall x,z \in \mathbb{Z}^{d}$.\\
This is the event that in an infinitesimally short time period $(t,t+dt)$, a particle at location $x+z$ moves to location $x$ within the susceptible group.\\
\item $(I,x) \rightarrow (I,x+z)$ with probability $\kappa a(z)dt, \forall x,z \in \mathbb{Z}^{d}$.\\
This is the event that in an infinitesimally short time period $(t,t+dt)$, a particle at location $x$ moves to location $x+z$ within the infected group.\\
\item $(I,x+z) \rightarrow (I,x)$ with probability $\kappa a(-z)dt, \forall x,z \in \mathbb{Z}^{d}$.\\
This is the event that in an infinitesimally short time period $(t,t+dt)$, a particle at location $x+z$ moves to location $x$ within the infected group.\\
\item $(R,x) \rightarrow (R,x+z)$ with probability $\kappa a(z)dt, \forall x,z \in \mathbb{Z}^{d}$.\\
This is the event that in an infinitesimally short time period $(t,t+dt)$, a particle at location $x$ moves to location $x+z$ within the recovered group.\\
\item $(R,x+z) \rightarrow (R,x)$ with probability $\kappa a(-z)dt, \forall x,z \in \mathbb{Z}^{d}$.\\
This is the event that in an infinitesimally short time period $(t,t+dt)$, a particle at location $x+z$ moves to location $x$ within the recovered group.\\
\item $(S,x) \rightarrow (I,x)$ with probability $\beta dt, \forall x,z \in \mathbb{Z}^{d}$.\\
This is the event that a particle at location $x$ in the susceptible group transitions to the infected group.\\
\item $(I,x) \rightarrow (R,x)$ with probability $\gamma dt, \forall x,z \in \mathbb{Z}^{d}$.\\
This is the event that a particle at location $x$ in the infected group transitions to the recovered group.
\end{enumerate}

\section{The First Moments}

One of the important questions in epidemic models is what are the expected values of the susceptible, infected, and recovered population groups and what is the long term asymptotic behavior. 

\subsection{Deriviation of the Differential Equations}
In this section, the Kolmogorov forward stochastic differential equations are derived and used to find the first moments (expectations) of the three population groups.

\begin{theorem}\label{thm: differential equations first moment}
The differential equations for the first moment of the susceptible, infected, and recovered groups are 
\begin{align}\label{eq: diff eq first moment sus}
\displaystyle \frac{\partial m_{1}^{S}(t,x)}{\partial t} &= \kappa \mathcal{L} m_{1}^{S}(t,x) - \beta m_{1}^{I}(t,x),\\
\label{eq: diff eq first moment inf}
\displaystyle \frac{\partial m_{1}^{I}(t,x)}{\partial t} &= \kappa \mathcal{L} m_{1}^{I}(t,x) + (\beta - \gamma) m_{1}^{I}(t,x),\\
\label{eq: diff eq first moment rec}
\displaystyle \frac{\partial m_{1}^{R}(t,x)}{\partial t} &= \kappa \mathcal{L} m_{1}^{R}(t,x) + \gamma m_{1}^{I}(t,x).
\end{align}
where $m_{1}^{D}(t,x) = E[D(t,x)]$ for $D= S, I,$ or $R$ and the discrete Laplace operator is the generator of the underlying random walk of the individuals and $$\mathcal{L}f(t,x) = \sum\limits_{z \neq 0, z \in \mathbb{Z}^{d}} a(z)[f(t, x+z)- f(t,x)], x \in \mathbb{Z}^{d}.$$
\end{theorem}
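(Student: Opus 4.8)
The plan is to derive \eqref{eq: diff eq first moment sus}--\eqref{eq: diff eq first moment rec} from the Kolmogorov forward (master) equations of the Markov jump process on particle configurations underlying the model. Write $\eta_t=(S(t,\cdot),I(t,\cdot),R(t,\cdot))$ for the state of the system at time $t$. Since by assumption at most one of the events (i)--(viii) fires in an infinitesimal interval $(t,t+dt)$, I would first condition on $\eta_t$ and write, for each species $D\in\{S,I,R\}$ and each site $x\in\mathbb{Z}^d$,
\[
E\big[D(t+dt,x)\mid\eta_t\big]=D(t,x)+\big(r_+^D(t,x)-r_-^D(t,x)\big)\,dt+o(dt),
\]
where $r_+^D(t,x)$ and $r_-^D(t,x)$ denote the total rates of events that respectively increase and decrease $D(t,x)$; then take expectations and pass to the limit $dt\to0$ to recover $\partial_t m_1^D(t,x)$.

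The second step is to tabulate those rates. The migration events acting on species $D$ contribute a loss $-\kappa\,D(t,x)\sum_{z\neq0}a(z)$ (particles leaving $x$) and a gain $+\kappa\sum_{z\neq0}a(-z)\,D(t,x+z)$ (particles entering $x$ from $x+z$); using \textcircled{1} and \textcircled{3}, these combine after taking expectations into $\kappa\sum_{z\neq0}a(z)\big[m_1^D(t,x+z)-m_1^D(t,x)\big]=\kappa\,\mathcal{L}m_1^D(t,x)$ (equivalently, \textcircled{2} together with $a(0)=-1$ yields the compact form $\mathcal{L}f(x)=\sum_{z}a(z)f(x+z)$). This is precisely where the hypotheses on $a$ enter and where the stated nonlocal Laplacian comes from. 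For the reaction events: the infection event $S\to I$ at site $x$ occurs at a rate proportional to the local infected population (equivalently, each infected individual transmits at rate $\beta$ to a co-located susceptible), so in expectation it removes susceptibles and creates infecteds at the common rate $\beta m_1^I(t,x)$, producing $-\beta m_1^I$ in the $S$-equation and $+\beta m_1^I$ in the $I$-equation; the recovery event $I\to R$ occurs at rate $\gamma$ per infected, producing $-\gamma m_1^I$ in the $I$-equation and $+\gamma m_1^I$ in the $R$-equation. Adding the migration and reaction contributions for each species and dividing by $dt$ gives the three equations.

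The step I expect to require the most care is upgrading this infinitesimal bookkeeping to a rigorous argument: one must check that the first moments $m_1^D(t,x)$ are finite and $t$-differentiable, and, crucially, that expectation may be interchanged with the infinite lattice sum defining $\mathcal{L}$. I would handle this by imposing a mild condition on the initial data (bounded, or compactly supported) and using that $\mathcal{L}$ is a bounded operator on $\ell^\infty(\mathbb{Z}^d)$ — with operator norm at most $2$, since $a(z)\ge0$ for $z\ne0$ and $\sum_{z\ne0}a(z)=1$ by \textcircled{3} — so that a Duhamel/Gronwall estimate for the coupled linear system bounds the spatial growth of $m_1^D(t,\cdot)$ uniformly on compact time intervals and legitimizes Fubini. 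Finally, as a consistency check I would note that summing the three right-hand sides gives $\kappa\,\mathcal{L}\big(m_1^S+m_1^I+m_1^R\big)$, so the total first moment $m_1^N:=m_1^S+m_1^I+m_1^R$ obeys the pure nonlocal heat equation $\partial_t m_1^N=\kappa\,\mathcal{L}m_1^N$, reflecting conservation of particles under migration.
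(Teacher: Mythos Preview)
Your proposal is correct and takes essentially the same approach as the paper: both derive the first-moment equations via the Kolmogorov forward description, writing the infinitesimal increment of each compartment at site $x$ in terms of gain/loss rates from the migration events and the $S\to I$, $I\to R$ transitions, then taking expectations and letting $dt\to0$. Your treatment is in fact more careful than the paper's---you explicitly invoke the symmetry of $a$ to collapse the migration terms into $\kappa\mathcal{L}m_1^D$, flag the Fubini/differentiability issues, and add the conservation check $\partial_t m_1^N=\kappa\mathcal{L}m_1^N$---but the underlying argument is the same.
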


\begin{proof} During the infinitesimal time period $(t,t+dt)$, for the susceptible group at location $x \in \mathbb{Z}^{d}$ is $S(t+dt,x)= S(t,x) + \xi (dt)$ where\\ 
$$\xi(dt) = \begin{cases}
        1 \hspace{.25cm} w.p \hspace{.25cm} \sum\limits_{z \neq 0}S(t, x+z) \kappa a(z) dt \hspace{.25cm} \text{for the event (ii)}\\
        -1 \hspace{.25cm} w.p \hspace{.25cm} \sum\limits_{z \neq 0}S(t, x) \kappa a(z) dt + I(t,x) \beta dt \hspace{.25cm} \text{for the events (i),(vii)}\\
        0 \hspace{.25cm} \text{otherwise,}
    \end{cases}$$
and    
\begin{align*}
    \qquad m_{1}^{S}(t+dt,x)]&= E \Big[(S(t,x)+1)[\sum\limits_{z \neq 0}S(t, x+z) \kappa a(z) dt]\\ 
    &+ (S(t,x)-1) [\sum\limits_{z \neq 0}S(t, x) \kappa a(z) dt+ I(t,x) \beta dt] \\
    &+ S(t,x)[1- \sum\limits_{z \neq 0}S(t, x+z) \kappa a(z) dt- (\sum\limits_{z \neq 0}S(t, x) \kappa a(z) dt + I(t,x) \beta dt)\Big]
\end{align*}

Subtract $m_1^S(t,x)$ and divide $dt$ at both sides, $dt\rightarrow 0$, we can get Equation \eqref{eq: diff eq first moment sus}:
\begin{align*}
    \displaystyle \frac{\partial m_{1}^{S}(t,x)}{\partial t} = \kappa \mathcal{L} m_{1}^{S}(t,x) - \beta m_{1}^{I}(t,x).
\end{align*}

For the infected group: $I(t+dt,x)= I(t,x) + \xi (dt)$, where

 $\xi (dt)= \begin{cases}
        1, \hspace{.25cm} w.p \hspace{.25cm} I(t,x) \beta dt + \sum\limits_{z \neq 0}I(t, x+z) \kappa a(-z) dt \hspace{.25cm} \text{for the events (iv), (vii)}\\
        -1, \hspace{.25cm} w.p \hspace{.25cm} I(t,x) \gamma dt + \sum\limits_{z \neq 0}I(t, x) \kappa a(z) dt \hspace{.25cm} \text{for the events (iii), (viii)}\\
        0, \hspace{.25cm} \text{otherwise,}
    \end{cases}$

Utilizing a similar process for the infected group, we get Equation \eqref{eq: diff eq first moment inf}:\\

$\displaystyle \frac{\partial m_{1}^{I}(t,x)}{\partial t} = \kappa \mathcal{L} m_{1}^{I}(t,x) + (\beta - \gamma) m_{1}^{I}(t,x)$.

For the recovered group: $R(t+dt,x)= R(t,x) + \xi (dt)$ where

$\xi(dt)= \begin{cases}
        1 \hspace{.25cm} w.p \hspace{.25cm} I(t,x) \gamma dt + \sum\limits_{z \neq 0}R(t, x+z) \kappa a(-z) dt \hspace{.25cm} \text{for the events (vi),(viii)}\\
        -1 \hspace{.25cm} w.p \hspace{.25cm} \sum\limits_{z \neq 0}R(t, x) \kappa a(z) dt \hspace{.25cm} \text{for the event (v)}\\
        0 \hspace{.25cm} \text{otherwise}
    \end{cases}$\\
    
Similar to the first two population groups, we get Equation \eqref{eq: diff eq first moment rec}:\\

$\displaystyle \frac{\partial m_{1}^{R}(t,x)}{\partial t} = \kappa \mathcal{L} m_{1}^{R}(t,x) + \gamma m_{1}^{I}(t,x)$.\\

\end{proof}
To elucidate the differences between the nonlocal operator $\kappa\mathcal{L}$ and the Laplace operator $D\Delta$ resulted from Brownian motion, we compare the
model in Theorem \ref{thm: differential equations first moment} with the current popular diffusion model (\citep{chinviriyasit2010numerical}). The structures in subsequent studies that extend this model resemble a similar format driven by the generator of Brownian motion, $\Delta$ (the Laplacian operator), as seen in works like (\citep{caraballo2018analysis,ammi2023optimal}). Let $\Omega$ be a bounded domain in $R^d$ with smooth boundary $\partial \Omega$ and $\eta$ be the outward unit normal vector on the boundary, the SIR reaction-diffusion model can be described by
\begin{align*}
& S_t=D \Delta S-\beta S I, \quad\qquad z \in \Omega, t>0, \\
& I_t=D \Delta I-\gamma I+\beta S I, \quad z \in \Omega, t>0, \\
& R_t=D \Delta I+\gamma I, \quad\qquad \ \ z \in \Omega, t>0,
\end{align*}

with homogeneous Neumann boundary conditions
$$
\partial_\eta S=\partial_\eta I=\partial_\eta R=0, \quad z \in \partial \Omega, t>0,
$$

and initial conditions
$$
S(z, 0)=S_0(z) \geqslant 0, \quad I(z, 0)=I_0(z) \geqslant 0, \quad R(z, 0)=R_0(z) \geqslant 0, \quad z \in \bar{\Omega},
$$

where $S(z, t), I(z, t)$ and $R(z, t)$ denote the numbers of susceptible, infected and recovered individuals at location $z$ and time $t$. $N$ is the total population. The transmission from susceptibles to infectives is assumed to be $\beta S I$ where $\beta$ is the transmissibility coefficient. The spatial propagation of the individuals is modeled by the constant diffusion coefficients $D_S \geqslant 0, D_I \geqslant 0$ and $D_R \geqslant 0$ for the susceptibles, infected and recovered, respectively. And here $D=D_S=D_I=D_R$. The homogeneous Neumann boundary condition ensures that the model respects spatial containment, aligning with the idea of a closed system.

 Unlike Brownian motion, which is governed by the Laplacian operator $\Delta f(x)=\frac{1}{2} \sum_{i=1}^d \frac{\partial^2 f}{\partial x_i^2}$ in continuous space $\mathbb{R}^d$ in (\citep{chinviriyasit2010numerical,ammi2023optimal,caraballo2018analysis}) and arises as the scaling limit of discrete random walks restricted to local neighborhood movements, the operator $\mathcal{L}$ allows for a broader class of movement patterns.  In contrast, the operator $\mathcal{L}$ allows for a broader class of movement patterns. Specifically, $\mathcal{L}$ enhances epidemic models by incorporating complex mobility patterns that accommodate both local and nonlocal movements.

To compare $\kappa\mathcal{L}$ with $D\Delta$ , we consider the continuum limit of the discrete model. The discrete nonlocal operator $\mathcal{L}$ can approximate the continuous Laplacian $\Delta$ under appropriate scaling of $\kappa$ and $a(z)$. If $a(z)$ is significantly supported only for $z$ in a neighborhood with limit radius, that is, when the mobility kernel $a(z)$ is symmetric and has finite second moments (i.e., finite variance $\sigma^2=\sum\limits_{z\in Z^d}|z|^2a(z)<\infty$), then the movements are predominantly within certain range, the nonlocal operator $\mathcal{L}$ can be approximated by a multiple of Laplacian operator $\Delta$. This is achieved via a Taylor expansion and leveraging the properties of the kernel.

For smooth functions $f(t,x)$, we can expand $f(t,x+hz)$ around $x$ using a Taylor series where the lattice spacing $h\rightarrow 0$:
$$
f(t,x+hz)=f(t,x)+hz^{\top} \nabla f(t,x)+\frac{1}{2} h^2z^{\top} \nabla^2 f(t,x) z+\cdots .
$$

Substituting this expansion into the operator $\mathcal{L} f(t,x)$, we have:
$$
\mathcal{L} f(t,x)=\sum_{z \in \mathbb{Z}^d}[f(t,x+hz)-f(t,x)] a(z) \approx \sum_{z \in \mathbb{Z}^d}\left(hz^{\top} \nabla f(t,x)+\frac{1}{2}h^2z^{\top} \nabla^2 f(t,x) z\right) a(z) .
$$

Because the mobility kernel $a(z)$ is symmetric (i.e., $a(z)=a(-z)$), $\sum\limits_{z\in Z^d} z a(z)=0$ and thus the first-order term vanishes:
$$
\sum_{z \in \mathbb{Z}^d} z^{\top} \nabla f(t,x) a(z)=\nabla f(t,x)^{\top} \sum_{z \in \mathbb{Z}^d} z a(z)=0
$$
Therefore, the leading term is:
$$
\mathcal{L} f(t,x) \approx \sum_{z \in \mathbb{Z}^d} \frac{h^2}{2} z^{\top} \nabla^2 f(t,x) z a(z)
$$

This expression can be rewritten using the properties of the trace and assuming isotropy if we want to approximate the Brownian motion or simple random walk in which all directions are equivalent:
$$
\mathcal{L} f(t,x) \approx \frac{h^2}{2} \operatorname{Tr}\left(\nabla^2 f(t,x) \sum_{z \in \mathbb{Z}^d} z z^{\top} a(z)\right) .
$$

Let $\Sigma$ be the covariance matrix of the kernel $a(z)$ :$
\Sigma=\sum_{z \in \mathbb{Z}^d} z z^{\top} a(z) .
$

If $a(z)$ is isotropy (movement is the same in all directions), $\Sigma=\sigma^2 I$, where $\sigma^2$ is the variance:
$$
\sigma^2=\sum_{z \in \mathbb{Z}^d}|z|^2 a(z) .
$$

Therefore, we have:
$$
\mathcal{L} f(t,x) \approx \frac{h^2}{2} \operatorname{Tr}\left(\nabla^2 f(t,x) \sigma^2 I\right)=\frac{\sigma^2}{2} \operatorname{Tr}\left(\nabla^2 f(t,x)\right)=\frac{h^2\sigma^2}{2} \Delta f(t,x),
$$

In other words, the diffusion coefficient in the classical stochastic model driven by Brownian motion $D\approx \displaystyle\frac{\kappa h^2\sigma^2}{2}$.
As the lattice spacing $h$ approaches zero, the diffusion coefficient $D=\frac{\kappa h^2 \sigma^2}{2}$ also approaches zero, assuming that $\kappa$ and $\sigma^2$ remain constant. This seems counterintuitive because we expect the diffusion coefficient $D$ in the classical model to remain finite to describe meaningful diffusion in the continuous limit.

The key to resolving this issue lies in understanding how the parameters $\kappa, h$, and $\sigma^2$ should scale relative to each other when transitioning from a discrete to a continuous model. The key to resolving this issue lies in understanding how the parameters $\kappa, h$, and $\sigma^2$ should scale relative to each other when transitioning from the current nonlocal discrete model to a continuous model. To maintain a finite diffusion coefficient $d$ as $h \rightarrow 0$, we need to adjust the mobility rate $\kappa$ appropriately. Specifically, $\kappa$ should scale inversely with $h^2$ :
$$
\kappa=\frac{\tilde{\kappa}}{h^2},
$$

where $\tilde{\kappa}$ is a constant independent of $h$. As the lattice becomes finer (i.e., $h$ becomes smaller), individuals need to take more steps to cover the same macroscopic distance in a given time. If we keep $\kappa$ constant, individuals would move less overall as $h$ decreases, leading to $d \rightarrow 0$, which implies no diffusion in the continuous limit.

By scaling $\kappa$ inversely with $h^2$, we ensure that the overall movement of individuals remains consistent as $h \rightarrow 0$. This adjustment compensates for the decreasing step size by increasing the movement frequency, thereby maintaining the same nonlocal diffusion behavior. That is, the nonlocal discrete operator $\mathcal{L}$ approximates the continuous Laplacian $\Delta$ when the movements are local and the lattice spacing is small.

To illustrate the impact of different mobility patterns on epidemic spread, we compare the epidemic distributions generated by the nonlocal operator $\mathcal{L}$ and a local Brownian motion type  simple random walk on a $51$ by $51$ 
 $Z^2$ grid. We assume that a single individual is initially infected at the central location $(26,26)$. In the first scenario, the nonlocal operator $\mathcal{L}$ utilizes a mobility kernel $a(z)$ that follows a normal distribution with mean 0 and variance 16, allowing individuals to traverse longer distances representing nonlocal movement. In the second scenario, we consider a simple random walk where individuals have an equal probability of moving to neighboring locations only, reflecting local movement patterns akin to Brownian motion.
\begin{figure}[h]
    \centering
    \begin{minipage}[b]{0.48\linewidth}
        \centering
        \includegraphics[width=\linewidth]{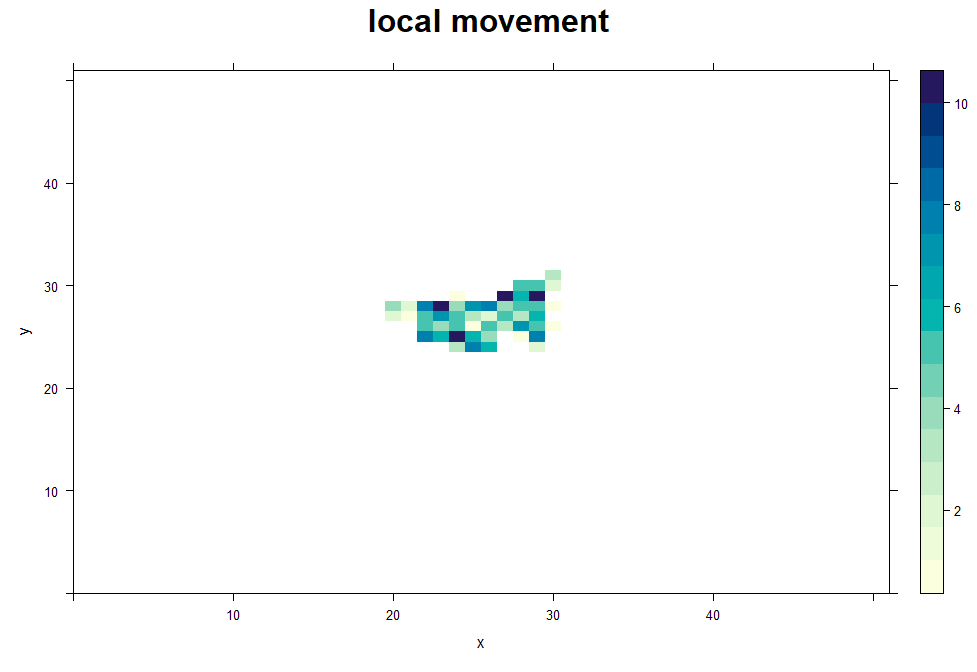}
        \label{fig:local}
    \end{minipage}\hfill
    \begin{minipage}[b]{0.48\linewidth}
        \centering
        \includegraphics[width=\linewidth]{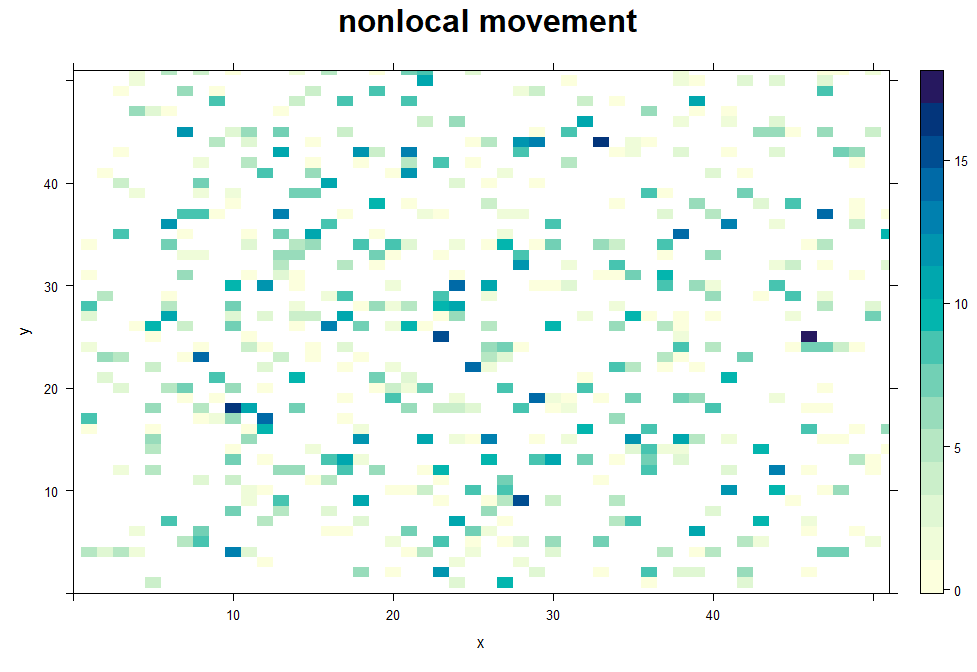}
        \label{fig:nonlocal}
    \end{minipage}
    \caption{This comparison highlights the critical differences in epidemic spread dynamics under local versus nonlocal movement patterns. Over 50,000 iterations, local movement patterns driven by Brownian motion or simple random walk tend to confine the spread within smaller areas, often resulting in centralized outbreaks, whereas our nonlocal modeling approach facilitates a pattern in wide and more dispersed distributions of infections. }
    \label{fig:both}
\end{figure}

As shown in Figure \ref{fig:both}, the epidemic distributions differ significantly between the two movement patterns. The non-local movement modeled by the normal distribution allows individuals to traverse longer distances, resulting in a wider spread of the infection. In contrast, the local movement pattern restricts individuals to nearby locations, leading to a more concentrated epidemic distribution. This comparison demonstrates the effectiveness and flexibility utilizing the operator $\mathcal{L}$ to model various mobility patterns by adjusting the kernel $a(z)$. The comparison suggests that the operator $\mathcal{L}$ provides a more flexible framework in simulating epidemic spread than the models solely based on Brownian motion, which are generally used in the SIR models and its variants.

By adjusting the kernel $a(z)$, we can simulate how individuals move and spread disease over both short and long distances. This concept extends naturally from the discrete lattice $\mathbb{Z}^d$ to continuous space $\mathbb{R}^d$ by defining a similar operator:
\[
\mathcal{L} f(t, x) = \int_{\mathbb{R}^d} a(z) [f(t, x + z) - f(t, x)] \, dz, \quad x \in \mathbb{R}^d.
\]

In particular, when $a(z)$ has finite variance, that is, $\int_{\mathbb{R}^d} |z|^2 a(z) \, dz < \infty$, the corresponding random walk approximates Brownian motion under scaling limits. Conversely, if $a(z)$ has infinite variance, meaning $\int_{\mathbb{R}^d} |z|^2 a(z) \, dz = \infty$, the random walk approximates a stable Lévy process under scaling limits, according to the generalized central limit theorem.

Therefore, by carefully selecting the kernel $a(z)$, the operator $\mathcal{L}$ enables us to model complex mobility patterns, including heavy-tailed movement distributions that capture both local and long-range human mobility. This provides a more accurate representation of real-world human movement in epidemic models, allowing for the simulation of disease spread over various spatial scales and enhancing our understanding of epidemic dynamics.

\subsection{Solving for the First Moments}

Now in order to solve the differential equations there are 2 cases- homogeneous space and inhomogeneous space. The first case is the homogeneous space where there is the assumption the spaces are equivalent, meaning $x$ and $x+z$ are the same and the Laplace operator $\mathcal{L} f(t,x)= \sum\limits_{z \neq 0} a(z)[f(t,x+z)-f(t,x)]=0$. Then the differential equations from Theorem \ref{thm: differential equations first moment} now no longer have the Laplace operator and the differential equations become:
\begin{align}
    \label{eq: sus diff equation from thm 1 without L}
    \displaystyle \frac{\partial m_{1}^{S}(t,x)}{\partial t} &= - \beta m_{1}^{I}(t,x),\\
    \label{eq: inf diff equation from thm 1 without L}
    \displaystyle \frac{\partial m_{1}^{I}(t,x)}{\partial t} &= (\beta - \gamma) m_{1}^{I}(t,x),\\
    \label{eq: rec diff equation from thm 1 without L}
    \displaystyle \frac{\partial m_{1}^{R}(t,x)}{\partial t} &= \gamma m_{1}^{I}(t,x).
\end{align}

\begin{theorem}\label{thm:first moment homogeneous space}
In the homogeneous space, as $t \longrightarrow \infty$, with initial conditions $S(0,x) = \rho_{0} >0, I(0,x) = \delta_y(x)=\begin{cases}
        1 \hspace{.25cm} if \hspace{.25cm} x=y\\
        0 \hspace{.25cm} if \hspace{.25cm} x \neq y
\end{cases}$, the location $y$ marks the starting point of the epidemic, where the first confirmed case of infection has been identified. $R(0,x)=0$, if $\beta \neq \gamma$, the steady states $m_{1}^{S}(t,x)$, $m_{1}^{I}(t,x)$, and $m_{1}^{R}(t,x)$ exist and the solutions are 
\begin{align}
m_{1}^{S}(t,x) &= \displaystyle \frac{-\beta e^{\displaystyle (\beta -\gamma)t} + \beta + \rho_{0}(\beta - \gamma)}{\beta - \gamma}\delta_y(x),\\
m_{1}^{I}(t,x) &= e^{\displaystyle (\beta -\gamma)t}\delta_y(x),\\
m_{1}^{R}(t,x) &= \displaystyle \frac{\gamma e^{\displaystyle (\beta - \gamma)t} - \gamma}{(\beta - \gamma)}\delta_y(x).
\end{align}
If $\beta = \gamma$, the solutions are
\begin{align}
m_{1}^{S}(t,x) &= (\rho_{0}- \beta  t)\delta_y(x),\\
m_{1}^{I}(t,x) &= \delta_y(x) ,\\
m_{1}^{R}(t,x) &= \gamma  t \delta_y(x).
\end{align}
\end{theorem}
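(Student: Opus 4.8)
The plan is to use the defining feature of the homogeneous-space hypothesis: since all sites are equivalent, $\mathcal{L}f\equiv 0$, so the system \eqref{eq: sus diff equation from thm 1 without L}--\eqref{eq: rec diff equation from thm 1 without L} is nothing but a family of constant-coefficient ordinary differential equations, one for each site $x\in\mathbb{Z}^d$, coupled only through their right-hand sides. The structural observation that makes everything explicit is that \eqref{eq: inf diff equation from thm 1 without L} is \emph{autonomous} in $m_1^I$ — it does not involve $m_1^S$ or $m_1^R$. So I would solve it first, then substitute the resulting closed-form expression as a known forcing term into \eqref{eq: sus diff equation from thm 1 without L} and \eqref{eq: rec diff equation from thm 1 without L}, each of which then reduces to a single direct integration in $t$. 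I would also point out at the outset that this pointwise reduction — $\mathcal{L}\equiv 0$ on functions that are constant across equivalent sites — is precisely the conceptual content of the ``homogeneous space'' assumption, which is why the spatial PDE system collapses to decoupled ODEs.

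Concretely: first solve $\partial_t m_1^I=(\beta-\gamma)m_1^I$ with $m_1^I(0,x)=I(0,x)=\delta_y(x)$, giving $m_1^I(t,x)=e^{(\beta-\gamma)t}\delta_y(x)$ when $\beta\neq\gamma$ and $m_1^I(t,x)=\delta_y(x)$ when $\beta=\gamma$. Inserting this into \eqref{eq: sus diff equation from thm 1 without L} gives $\partial_t m_1^S=-\beta e^{(\beta-\gamma)t}\delta_y(x)$; integrating from $0$ to $t$ against the initial datum (read with the $\delta_y(x)$ factor, so that the compact algebraic form in the statement is reproduced) yields $m_1^S(t,x)=\bigl(\rho_0-\beta\tfrac{e^{(\beta-\gamma)t}-1}{\beta-\gamma}\bigr)\delta_y(x)$, which rearranges to the asserted formula; for $\beta=\gamma$ the forcing is the constant $-\beta\delta_y(x)$, and integration gives $(\rho_0-\beta t)\delta_y(x)$. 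The recovered moment is handled identically: $\partial_t m_1^R=\gamma e^{(\beta-\gamma)t}\delta_y(x)$ with $m_1^R(0,x)=0$ gives $m_1^R(t,x)=\gamma\tfrac{e^{(\beta-\gamma)t}-1}{\beta-\gamma}\delta_y(x)$, and the constant-forcing limit $\beta=\gamma$ gives $\gamma t\,\delta_y(x)$. Existence and uniqueness are immediate because the right-hand sides are affine (hence Lipschitz) in $(m_1^S,m_1^I,m_1^R)$.

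For the asymptotic claim I would let $t\to\infty$ and read off the sign of $\beta-\gamma$. When $\beta<\gamma$ the exponential $e^{(\beta-\gamma)t}$ decays to $0$, so $m_1^I(t,x)\to 0$ and $m_1^S,m_1^R$ converge to the finite steady states $\bigl(\rho_0+\tfrac{\beta}{\beta-\gamma}\bigr)\delta_y(x)$ and $\tfrac{\gamma}{\gamma-\beta}\delta_y(x)$; when $\beta>\gamma$ the infected and recovered moments grow without bound, describing an outbreak; and when $\beta=\gamma$ the infected moment is frozen at $\delta_y(x)$ while $m_1^S$ and $m_1^R$ drift linearly in $t$. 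I would note that this trichotomy reproduces exactly the classical threshold according to whether $R_0=\beta/\gamma$ is below, above, or equal to $1$.

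There is no deep analytic obstacle here — the argument is essentially bookkeeping. The two points that require genuine care are (i) fixing the constants of integration in the $m_1^S$ equation so that the specific compact form in the statement, rather than an equivalent rearrangement, comes out, and (ii) treating the degenerate case $\beta=\gamma$ as the removable-singularity limit of the generic formula, i.e.\ using $\lim_{\beta\to\gamma}\tfrac{e^{(\beta-\gamma)t}-1}{\beta-\gamma}=t$, or equivalently re-deriving it directly from the $\beta=\gamma$ versions of the ODEs. Everything else follows from elementary first-order linear ODE theory.
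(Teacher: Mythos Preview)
Your proposal is correct and follows exactly the approach the paper takes: the paper's own proof consists of a single sentence stating that one solves the ODEs \eqref{eq: sus diff equation from thm 1 without L}--\eqref{eq: rec diff equation from thm 1 without L} ``using regular ODE methods,'' and your write-up is precisely a fleshed-out version of that computation (solve the autonomous $I$-equation first, then integrate the forced $S$- and $R$-equations). Your treatment is in fact more careful than the paper's, since you explicitly flag the $\beta=\gamma$ case as a removable-singularity limit and note the bookkeeping needed to match the $\delta_y(x)$ factor on the $\rho_0$ term.
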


\begin{proof} Solving the ODE equations \eqref{eq: sus diff equation from thm 1 without L}, \eqref{eq: inf diff equation from thm 1 without L}, \eqref{eq: rec diff equation from thm 1 without L} using regular ODE methods, we get the solutions for the first moments of the SIR model.
\end{proof}

Now we need to solve the differential equations given in Theorem \ref{thm: differential equations first moment} in the inhomogeneous space. With the assumption that the space is inhomogeneous, we have that the spaces $x$ and $x+z$ are not equivalent and $\mathcal{L} f(t,x)= \sum\limits_{z \neq 0} a(z)[f(t,x+z)-f(t,x)] \neq 0$. To solve the in-homogeneous equations, we need a few new definitions and theorems:
    
\begin{definition}[Fourier Transform] \label{def: fourier transform} Denote the Fourier transform of $f(x)$ as
$\displaystyle \hat{f}(k) = \sum\limits_{x \in \mathbb{Z}^d} f(x)e^{\displaystyle ikx}.$
\end{definition}

\begin{definition}[Inverse Fourier Transform] \label{def: inverse fourier transform}
For the Fourier transform $\hat{f}(k) = \sum\limits_{x \in \mathbb{Z}^d} e^{\displaystyle itx} f(x)$, the inverse Fourier transform is $f(x)= \displaystyle \frac{1}{(2 \pi)^{d}} \int_{T^d} \hat{f}(k) e^{\displaystyle -ikx}dk$, where $T^{d} = [- \pi, \pi]^{d}.$
\end{definition}

\begin{definition}[Intensity of Mobility Effect] \label{def: intensity of mobility}
Define $ \alpha=\kappa \hat{a}(k)$, where\\
$\hat{a}(k) = \sum\limits_{z \in \mathbb{Z}^{d}} a(z)e^{\displaystyle ikz}$, as a measure of the intensity of the dynamical movement of particles or mobility effect.
\end{definition}

\begin{lemma}[Fourier Transform of function $\mathcal{L}f(k)$] \label{lemma: fourier transform of L}
Denote $\hat{\mathcal{L}}(k)$ as the Fourier symbol of the operator $\mathcal{L}$, then $\widehat{\mathcal{L}f}(k) = \hat{f}(k)\hat{\mathcal{L}}(k)$ and $\hat{\mathcal{L}}(k) = \hat{a}(k) \le 0$.
\end{lemma}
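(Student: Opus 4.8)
The plan is to reduce $\mathcal{L}$ to a convolution operator and then apply the elementary multiplier property of the lattice Fourier transform. First I would note that conditions \textcircled{1}--\textcircled{3} together with $a(0)=-1$ allow us to absorb the ``$-f(t,x)$'' term: since $\sum_{z\neq 0}a(z)=1$,
\[
\mathcal{L}f(t,x)=\sum_{z\neq 0}a(z)\big[f(t,x+z)-f(t,x)\big]=\sum_{z\neq 0}a(z)f(t,x+z)-f(t,x)=\sum_{z\in\mathbb{Z}^d}a(z)f(t,x+z),
\]
using $a(0)f(t,x)=-f(t,x)$ for the last equality. Thus $\mathcal{L}$ is (a reflected) discrete convolution against the kernel $a$.

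Next I would compute the Fourier symbol directly from Definition~\ref{def: fourier transform}. Interchanging the double sum (legitimate because $a$ is absolutely summable, being a finite measure on $\mathbb{Z}^d\setminus\{0\}$ by \textcircled{3}, and $f(t,\cdot)$ is assumed summable so that $\hat f$ is defined), and substituting $y=x+z$,
\[
\widehat{\mathcal{L}f}(k)=\sum_{x\in\mathbb{Z}^d}\sum_{z\in\mathbb{Z}^d}a(z)f(t,x+z)e^{ikx}=\sum_{z\in\mathbb{Z}^d}a(z)e^{-ikz}\sum_{y\in\mathbb{Z}^d}f(t,y)e^{iky}=\hat a(-k)\,\hat f(k).
\]
By the symmetry \textcircled{1}, $\hat a(-k)=\sum_z a(z)e^{-ikz}=\sum_z a(-z)e^{-ikz}=\hat a(k)$, so $\widehat{\mathcal{L}f}(k)=\hat a(k)\hat f(k)$, which is exactly $\widehat{\mathcal{L}f}(k)=\hat f(k)\hat{\mathcal{L}}(k)$ with $\hat{\mathcal{L}}(k)=\hat a(k)$.

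For the sign, I would use \textcircled{2} to subtract the constant term: since $\sum_{z}a(z)=0$,
\[
\hat a(k)=\sum_{z\in\mathbb{Z}^d}a(z)\big(e^{ikz}-1\big)=\sum_{z\neq 0}a(z)\big(e^{ikz}-1\big),
\]
and the symmetry \textcircled{1} makes $\hat a(k)$ real, so taking real parts gives $\hat a(k)=\sum_{z\neq 0}a(z)\big(\cos(k\cdot z)-1\big)$. Each $a(z)\ge 0$ for $z\neq 0$ (these are jump intensities, with $\kappa a(z)\,dt$ a probability) and $\cos(k\cdot z)-1\le 0$, so every summand is nonpositive, hence $\hat{\mathcal{L}}(k)=\hat a(k)\le 0$. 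The argument is essentially routine; the only points deserving care are justifying the interchange of summations (absolute summability of $a$ plus the summability hypothesis on $f$) and recording that $\hat a(k)$ is real-valued, which is precisely where \textcircled{1} enters. I do not expect a genuine obstacle here.
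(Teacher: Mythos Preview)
Your argument is correct and entirely standard: rewriting $\mathcal{L}f$ as the full convolution $\sum_{z\in\mathbb{Z}^d}a(z)f(t,x+z)$ via $a(0)=-1$, pushing the Fourier transform through the convolution, and then using symmetry \textcircled{1} both to identify $\hat a(-k)=\hat a(k)$ and to reduce the sign question to $\sum_{z\neq 0}a(z)(\cos(k\cdot z)-1)\le 0$. The paper itself states this lemma without proof, so there is nothing to compare against; your write-up would serve perfectly well as the omitted justification.
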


Now we can use Definitions \ref{def: fourier transform} and \ref{def: inverse fourier transform} to solve for the first moment of $I(t,x)$ in the inhomogeneous space. For the differential equation of $m_{1}^{I}(t,x)$ from Theorem \ref{thm: differential equations first moment} Equation \eqref{eq: diff eq first moment inf}, we can apply the Fourier transform from Definition \ref{def: fourier transform} to both sides of the equation and get:
$\displaystyle \frac{\partial \hat{m}_{1}^{I}(t,k)}{\partial t} = \kappa \hat{\mathcal{L}}(k) \hat{m}_{1}^{I}(t,k) + (\beta - \gamma) \hat{m}_{1}^{I}(t,k)$.

Assume $m_{1}^{I}(0,x) =
\begin{cases}
1 \hspace{.25cm} if \hspace{.25cm} x=0\\
0 \hspace{.25cm} if \hspace{.25cm} x \neq 0
\end{cases}$, then $\hat{m}_{1}^{I}(0,k) = \sum\limits_{x \in \mathbb{Z}^d} m_{1}^{I}(0,x) e^{\displaystyle ikx} = 1$.

$\hat{m}_{1}^{I}(t,k) =  \hat{m}_{1}^{I}(0,k) e^{\displaystyle \big[\kappa \hat{\mathcal{L}}(k) + (\beta - \gamma)\big]t} = e^{\displaystyle \big[\kappa \hat{\mathcal{L}}(k) + (\beta - \gamma)\big]t}$. 

By applying the Inverse Fourier transform from Definition \ref{def: inverse fourier transform}, we obtain:
$$m_{1}^{I}(t,x) = \displaystyle \frac{1}{(2 \pi)^{d}} \int_{T^{d}} \hat{E}[I(t,k)] e^{\displaystyle -ikx} dk = \displaystyle \frac{1}{(2 \pi)^{d}} \int_{T^{d}} e^{\displaystyle [\kappa \hat{\mathcal{L}}(k) + (\beta - \gamma)]t} e^{\displaystyle -ikx} dk.$$

To determine the first moments of the susceptible and recovered groups in inhomogeneous space, it is essential to derive more general solutions that account for spatial variability and the interactions between different population groups. A crucial first step in this process is to establish the transition probability from one location 
$x$ to another location $y$. Understanding this transition probability allows us to model how individuals move through space, which directly influences the spread of the infection and the dynamics of the susceptible and recovered populations.
\begin{lemma} \label{lemma: transition probability}
The transition probability of the particles $p(t,x,y)$ is the fundamental solution to the following equation
\begin{align}
\displaystyle \frac{\partial \mathnormal{p}(t,x,y)}{\partial t} &= \kappa \mathcal{L} \mathnormal{p} (t,x,y),\\
\mathnormal{p}(0,x,y) &= \delta (x-y) = \begin{cases} 
1 \hspace{.25cm} if \hspace{.25cm} x=y\\
0 \hspace{.25cm} if \hspace{.25cm} x \neq y
\end{cases}
\end{align}
and $\mathnormal{p}(t,x,y) = \displaystyle \frac{1}{(2 \pi)^d} \int_{T^{d}} e^{\displaystyle ik(x-y)} e^{\displaystyle \kappa \hat{\mathcal{L}}(k) t} dk.$
\end{lemma}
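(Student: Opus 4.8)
The plan is to reduce the linear evolution equation for $p(t,x,y)$ to a scalar ODE in Fourier space, solve it explicitly, and invert. First I would observe that the operator $\kappa\mathcal{L}$ is translation invariant on $\mathbb{Z}^d$ — it is convolution against the kernel $\kappa a(\cdot)$ — so its fundamental solution depends on $x$ and $y$ only through $x-y$. Writing $q(t,w) := p(t,w+y,y)$, it then suffices to solve $\partial_t q(t,w) = \kappa\mathcal{L}q(t,w)$ with $q(0,\cdot) = \delta_0$, and then to set $p(t,x,y) = q(t,x-y)$.

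Next I would apply the Fourier transform of Definition~\ref{def: fourier transform} in the spatial variable. By Lemma~\ref{lemma: fourier transform of L}, $\widehat{\mathcal{L}q}(t,k) = \hat{\mathcal{L}}(k)\,\hat{q}(t,k)$ with $\hat{\mathcal{L}}(k) = \hat{a}(k)$, and since $\delta_0$ is supported at the origin, $\hat{q}(0,k) = \sum_{w\in\mathbb{Z}^d}\delta_0(w)e^{ikw} = 1$. Hence the equation becomes the decoupled family of linear ODEs
\[
\frac{\partial \hat{q}(t,k)}{\partial t} = \kappa\hat{\mathcal{L}}(k)\,\hat{q}(t,k), \qquad \hat{q}(0,k) = 1,
\]
whose unique solution is $\hat{q}(t,k) = e^{\kappa\hat{\mathcal{L}}(k)t}$. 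Applying the inverse Fourier transform of Definition~\ref{def: inverse fourier transform} yields $p(t,x,y) = q(t,x-y) = \frac{1}{(2\pi)^d}\int_{T^d} e^{-ik(x-y)}e^{\kappa\hat{\mathcal{L}}(k)t}\,dk$; because $a(z)=a(-z)$ the symbol $\hat{\mathcal{L}}(k)=\hat{a}(k)=\sum_{z}a(z)\cos(k\cdot z)$ is even in $k$, so the substitution $k\mapsto -k$ turns this into the stated expression with $e^{ik(x-y)}$.

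Finally I would verify that this formula is a bona fide fundamental solution. Since $T^d$ is compact and $\kappa\hat{\mathcal{L}}(k)$ is bounded (in fact $-2\kappa\le\kappa\hat{\mathcal{L}}(k)\le 0$, because $0\le a(z)$ for $z\neq 0$ and $\sum_{z\neq 0}a(z)=1$), differentiation under the integral sign is legitimate and shows $\partial_t p = \kappa\mathcal{L}p$; the initial condition follows from $\frac{1}{(2\pi)^d}\int_{T^d}e^{ik(x-y)}\,dk = \delta(x-y)$. Conservation of total mass is immediate by evaluating the symbol at $k=0$: $\sum_{x\in\mathbb{Z}^d}p(t,x,y) = \hat{q}(t,0) = e^{\kappa\hat{\mathcal{L}}(0)t} = 1$, using $\hat{\mathcal{L}}(0)=\hat{a}(0)=\sum_z a(z)=0$. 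The one point not visible from this oscillatory integral, and the one I expect to require a separate argument, is nonnegativity of $p(t,x,y)$: I would obtain it by identifying $p(t,\cdot,y)$ with the transition kernel of the continuous-time Markov jump process generated by $\kappa\mathcal{L}$. Indeed $\kappa\mathcal{L} = \kappa(A-I)$ where $A$ is the stochastic operator $(Af)(x)=\sum_{z\neq 0}a(z)f(x+z)$, so $e^{t\kappa\mathcal{L}} = e^{-\kappa t}\sum_{n\ge 0}\frac{(\kappa t)^n}{n!}A^n$ is a sum of nonnegative operators, giving $p(t,x,y)\ge 0$. Uniqueness among bounded solutions then follows from injectivity of the Fourier transform applied to the difference of two candidate solutions.
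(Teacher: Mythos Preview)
Your proof is correct and follows essentially the same Fourier-transform route as the paper: diagonalize $\kappa\mathcal{L}$ via the transform of Definition~\ref{def: fourier transform}, solve the resulting scalar ODE, and invert. The paper transforms directly in the second variable $y$ (so the initial datum is $e^{ikx}$ and the factor $e^{ik(x-y)}$ falls out without appealing to the evenness of $\hat a$), and it omits your additional checks of nonnegativity, mass conservation, and uniqueness.
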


\begin{proof}
Then the Fourier transform of $\displaystyle \frac{\partial \mathnormal{p}(t,x,y)}{\partial t}$ becomes
\begin{align}
\displaystyle \frac{\partial \hat{\mathnormal{p}}(t,x,k)}{\partial t} &= \kappa \hat{\mathcal{L}}(k) \hat{\mathnormal{p}}(t,x,k),\\
\hat{\mathnormal{p}}(0,x,k) &= \sum\limits_{y \in \mathbb{Z}^{d}} \delta(x-y) e^{\displaystyle iky} = e^{\displaystyle ikx}.
\end{align}
$\implies \hat{\mathnormal{p}}(t,x,k)= \hat{\mathnormal{p}}(0,x,k) e^{\displaystyle \kappa \hat{\mathcal{L}}(k) t}$ and we have $\hat{\mathnormal{p}}(t,x,k)= e^{\displaystyle ikx} e^{\displaystyle \kappa \hat{\mathcal{L}}(k) t}$.\\
Using the Inverse Fourier formula from Definition \ref{def: inverse fourier transform}, we have that\\
$p(t,x,y) = \displaystyle \frac{1}{(2 \pi)^{d}} \displaystyle \int_{T^{d}} \big[ e^{\displaystyle ikx} e^{\displaystyle \kappa \hat{\mathcal{L}}(k) t} \big] e^{\displaystyle -iky}dk = \displaystyle \frac{1}{(2 \pi)^d} \int_{T^{d}} e^{\displaystyle ik(x-y)} e^{\displaystyle \kappa \hat{\mathcal{L}}(k) t} dk$.\\

This means that the transition probability depends on the distance between location $x$ and location $y$. Then plugging $x=0$ and $t=t-s$ into ${p}(t,x,y)$ we have that\\

$p(t-s,0,x-z) = \displaystyle \frac{1}{(2 \pi)^{d}} \int_{T^{d}} e^{\displaystyle ik(z-x)} e^{\displaystyle \kappa \hat{\mathcal{L}}(k)(t-s)}dk$.
\end{proof}

To study the movement properties of the individuals and whether the movement is transient or recurrent, we need to study the Green function:
\begin{center}
$G_{\lambda}(x,y) = \displaystyle \int_{0}^{\infty} e^{\displaystyle -\lambda t} \mathnormal{p}(t,x,y)dt$ for $\lambda \ge 0$.
\end{center}
When $\lambda=0$, $G_{0}(x,y)$, is the expected value of the number of visits of susceptible, infected, or recovered population to location $y$ if the original location is $x$. Either all states are transient or all states are recurrent (either positive recurrent or non-recurrent). When $\lambda =0, x=0, y=0$, the Green function equation becomes $G_{0}(0,0) = \displaystyle \frac{1}{(2 \pi)^{d}} \int_{T^{d}} \displaystyle \frac{1}{- \kappa \hat{\mathcal{L}}(k)} dk$. If $G_{0}(0,0) < \infty$, then the random walk is transient, which means that there is a positive probability that the particle will never return to the original location. If $G_{0}(0,0) = \infty$, then the random walk is recurrent, meaning the infected particle will return to the original location infinitely many times. If the random walk is recurring, then the infected particle can continue to infect the susceptible population at its original location. In this dynamical epidemic model, whether the epidemic at location $x$ will vanish or not depends on whether the infected particle returns original location.

\begin{theorem}[General Solution for Inhomogeneous Equation] \label{thm: general solution}
The general solution to the inhomogeneous equation
\begin{align}
\label{eq: differential general equation}
\displaystyle \frac{\partial U(t,x)}{\partial t} &= \kappa \mathcal{L} U(t,x) + V(t) U(t,x) + f(t,x),\\
U(0,x) &= \rho_{0} > 0,
\end{align}
is $U(t,x) = \rho_{0} E_{x}\Bigg[ e^{\displaystyle \int_{0}^{t} V(X_{s})ds}\Bigg]  + e^{\displaystyle \int_{0}^{t} V(X_{s})ds} \displaystyle \int_{0}^{t} \sum\limits_{z \in \mathbb{Z}^d} \mathnormal{p}(t-s,0,x-z) \tilde{f}(s,z) ds$ where $X_{s}$ is the stochastic process random walk driven by $\kappa \mathcal{L} f = \kappa \sum\limits_{z \in \mathbb{Z}^d} a(z) [f(x+z) -f(x)]$ and $V(X_{s})$ is the potential of the equation.
\end{theorem}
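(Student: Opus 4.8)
The plan is to recognise the claimed formula as a Feynman--Kac representation for the source-free part of the equation together with a Duhamel (variation-of-constants) correction for the forcing term $f$, and then to verify it by direct differentiation. First I would analyse $\partial_t u = \kappa\mathcal{L}u + V u$ with $u(0,\cdot)=\rho_0$. Let $X_s$ be the continuous-time random walk on $\mathbb{Z}^d$ generated by $\kappa\mathcal{L}$, i.e.\ the Markov chain whose transition kernel is the function $p(t,x,y)$ of Lemma~\ref{lemma: transition probability}. The Feynman--Kac formula then gives $u(t,x) = \rho_0\, E_x\!\big[ e^{\int_0^t V(X_s)\,ds}\big]$; this is justified by applying Dynkin's formula for pure-jump Markov chains to the functional $s \mapsto e^{\int_0^s V(X_r)\,dr}\, h(t-s, X_s)$ and using that $p$ solves $\partial_\tau p = \kappa\mathcal{L}p$ with $p(0,\cdot,\cdot)=\delta$. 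When $V=V(t)$ does not depend on position the exponential is deterministic and factors out, leaving $u(t,x)=\rho_0\, e^{\int_0^t V(s)\,ds}$; in the general case it remains inside the expectation as written.

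Second, for the forcing term I would invoke Duhamel's principle: if $\mathcal{T}_t$ denotes the solution operator of the source-free equation from Step~1, then $U(t,x) = \mathcal{T}_t\rho_0(x) + \int_0^t \big(\mathcal{T}_{t-s} f(s,\cdot)\big)(x)\,ds$, and unwinding $\mathcal{T}_{t-s}$ in terms of the kernel $p(t-s,0,\cdot)$ together with the potential factor produces exactly the stated expression, with $\tilde f$ the appropriately shifted source. Concretely I would differentiate the candidate $U$ in $t$: the first term contributes $\kappa\mathcal{L}U + V U$ restricted to that piece by Step~1; the Leibniz rule applied to $\int_0^t(\cdots)\,ds$ yields the boundary contribution at $s=t$, which equals $f(t,x)$ because $p(0,0,\cdot)=\delta$, plus an interior integral in which one again uses $\partial_\tau p(\tau,0,\cdot)=\kappa\mathcal{L}p(\tau,0,\cdot)$ to recover the missing $\kappa\mathcal{L}$ and $V$ terms. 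Evaluating at $t=0$ annihilates the integral and leaves $\rho_0\, E_x[e^0] = \rho_0$, so the initial condition is satisfied.

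The step I expect to be the main obstacle is the bookkeeping around the potential factor: as written, $e^{\int_0^t V(X_s)\,ds}$ is pulled outside the $s$-integral in the $f$-term, whereas the clean Feynman--Kac--Duhamel form carries $e^{\int_s^t V(X_r)\,dr}$ inside the integral. Reconciling the two requires either the assumption that $V$ is spatially homogeneous --- so the exponential is deterministic and the factorisation $e^{\int_0^t V} = e^{\int_0^s V}\, e^{\int_s^t V}$ lets one move it freely past the $s$-integral --- or a redefinition of $\tilde f$ that absorbs the discrepancy $e^{-\int_0^s V}$; I would state explicitly which convention is in force. The supporting facts I would need are the Chapman--Kolmogorov identity $\sum_w p(t-s,0,w)\,p(s,w,x) = p(t,0,x)$, which makes $\mathcal{T}_{t-s}\circ\mathcal{T}_s = \mathcal{T}_t$, and the absolute convergence of the series $\sum_z$ (uniform on compact time intervals), which follows since $a(\cdot)$ has unit mass off the origin and $p(t,0,\cdot)$ is a probability kernel; these make the interchanges of $\sum_z$ with $\partial_t$ and with $\mathcal{L}$, and differentiation under the integral sign, routine.
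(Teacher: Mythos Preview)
Your proposal is correct and follows essentially the same route as the paper: Duhamel's principle to split off the homogeneous part, the Feynman--Kac formula for that piece, and the transition kernel $p$ for the forcing term. The paper constructs the particular solution via the integrating-factor ansatz $w(t,x)=e^{\int_0^t V(s)\,ds}\,\tilde w(t,x)$ (which, as you correctly anticipate, requires $V$ to be spatially constant and defines $\tilde f(s,z)=f(s,z)\,e^{-\int_0^s V(r)\,dr}$), whereas you propose a posteriori verification by differentiating the candidate; these are equivalent, and your diagnosis of the bookkeeping around $\tilde f$ is exactly the point the paper's derivation handles.
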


\begin{proof}
By Duhamel's principle, we define the solution of equation \eqref{eq: differential general equation} to be $U(t,x) = U^{h}(t,x) + w(t,x)$, where $U^{h}(t,x)$ is the corresponding homogeneous solution and $w(t,x)$ is a particular solution to the in-homogeneous equation, and we will solve for $U^{h}(t,x)$ and $w(t,x)$ separately.\\

For corresponding homogeneous solution $U^{h}[(t,x)]$, we have
\begin{align}
\displaystyle \frac{\partial U^{h}(t,x)}{\partial t} &= \kappa \mathcal{L} U^{h}(t,x) + V(t) U^{h}(t,x),\\
U^{h}(0,x) &= \rho_{0} > 0,
\end{align}
and the solution to this differential equation is $U^{h}(t,x) = \rho_{0}  E_{x} \Bigg[e^{\displaystyle \int_{0}^{t} V(X_{s}) ds}\Bigg]$ by the Kac-Feyman Formula, where $X_s$ is the stochastic process random walk driven by $
\kappa \mathcal{L} f = \kappa \sum\limits_{z \in \mathbb{Z}^d} a(z)[f(x+z) - f(x)]$.\\

For the particular solution $w(t,x)$, we have
\begin{align}
\label{eq: particular solution for w(t,x)}
\displaystyle \frac{\partial w(t,x)}{\partial t} &= \kappa \mathcal{L} w(t,x) + V(t)w(t,x) + f(t,x),\\
w(0,x) &= 0,
\end{align}
and the solution to this differential equation is $w(t,x) = e^{\displaystyle \int_{0}^{t} V(X_{s})ds} \tilde{w}(t,x)$. Now we need to solve for $\tilde{w}(t,x)$ where
\begin{align}
\displaystyle \frac{\partial w(t,x)}{\partial t} &= \displaystyle \frac{\partial}{\partial t} \Bigg[e^{\displaystyle \int_{0}^{s} V(s)ds} \Bigg] \tilde{w} (t,x) + e^{\displaystyle \int_{0}^{s} V(s)ds} \Bigg[ \displaystyle \frac{\partial \tilde{w} (t,x)}{ \partial t} \Bigg],\\
\label{eq: differential eq for w(t,x)}
&= \Big(\displaystyle \int_{0}^{t}V(s)ds \Big) e^{\displaystyle \int_{0}^{t}V(s)ds} \tilde{w} (t,x) + e^{\displaystyle \int_{0}^{s} V(s)ds} \Bigg[ \displaystyle \frac{\partial \tilde{w} (t,x)}{ \partial t} \Bigg].
\end{align}
Setting equation \eqref{eq: particular solution for w(t,x)} equal to \eqref{eq: differential eq for w(t,x)} we get\\
$\kappa \mathcal{L} w(t,x) + V(t)w(t,x) + f(t,x) = V(t) e^{ \int_{0}^{t}V(s)ds} \tilde{w} (t,x) + e^{ \int_{0}^{s} V(s)ds} \Bigg[ \displaystyle \frac{\partial \tilde{w} (t,x)}{ \partial t} \Bigg]$.\\
Recall that $w(t,x) = e^{\displaystyle \int_{0}^{t} V(X_{s})ds} \tilde{w}(t,x)$, meaning we can cancel terms to get\\
$e^{\displaystyle \int_{0}^{t} V(s)ds} \left[ \displaystyle \frac{\partial \tilde{w} (t,x)}{\partial t} \right] = e^{\displaystyle \int_{0}^{t} V(s)ds} \kappa \mathcal{L} \tilde{w} (t,x) + f(t,x)$.\\
Then for $\tilde{w} (t,x)$ we have
\begin{align}
\displaystyle \frac{\partial \tilde{w}(t,x)}{\partial t} &= \kappa \mathcal{L} \tilde{w}(t,x) + f(t,x) e^{\displaystyle - \int_{0}^{t} V(s)ds},\\
\tilde{w}(0,x) &= 0.
\end{align}
To solve this we use the Fourier Transform from Definition \ref{def: fourier transform} and the transition probability from Lemma \ref{lemma: transition probability} and we find that

$\tilde{w}(t,x) = \displaystyle \int_{0}^{t} \sum\limits_{z \in \mathbb{Z}^{d}} \mathnormal{p} (t-s, 0, x-z) \tilde{f}(s,z)ds$.\\

Plugging $\tilde{w}(t,x)$ into $w(t,x) = e^{\displaystyle \int_{0}^{t} V(X_{s})ds} \tilde{w}(t,x)$ we get that

$w(t,x) = e^{\displaystyle \int_{0}^{t} V(X_{s})ds} \displaystyle \int_{0}^{t} \sum\limits_{z \in \mathbb{Z}^{d}} \mathnormal{p} (t-s, 0, x-z) \tilde{f}(s,z)ds$.\\

Now we have the solution to the general differential equation to be $U(t,x) = U^{h}(t,x) + w(t,x)$. Substituting in what we have for $U^{h}(t,x)$ and $w(t,x)$ we have

$U(t,x) = \rho_{0} E_{x} \left[ e^{\displaystyle \int_{0}^{t} V(X_{s}) ds} \right] + e^{\displaystyle \int_{0}^{t} V(s)ds} \displaystyle \int_{0}^{t} \sum\limits_{z \in \mathbb{Z}^{d}} \mathnormal{p} (t-s, 0, x-z) \tilde{f}(s,z)ds$.\\
\end{proof}
To solve for $m_{1}^{S}(t,x)$ in the inhomogeneous space, we apply Theorem \ref{thm: general solution} where $f(t,x)= - \beta m_{1}^{I}(t,x)$ but there is no potential $V(t)$ in these equations so then we have the solution to be
\begin{align*}
m_{1}^{S}(t,x) &= \displaystyle \frac{1}{(2 \pi)^d} \int_{T^{d}} \rho_{0} e^{\displaystyle \kappa \hat{\mathcal{L}}(k) t} e^{\displaystyle -ikx}dk\\
&- \displaystyle \frac{\beta}{(2\pi)^d} \displaystyle \int_{0}^{t} \sum\limits_{z \in \mathbb{Z}^{d}} \mathnormal{p} (t-s, 0, x-z) \left(\displaystyle \int_{T^{d}} e^{\displaystyle [\kappa \hat{\mathcal{L}}(k) + (\beta - \gamma)]s} e^{\displaystyle -ikz} dk \right) ds.
\end{align*}
The process for solving for the first moments of the recovered group in inhomogeneous space follows a similar to process as the susceptible group, except $f(t,x) = \gamma m_{1}^{I}(t,x)$, and we have the solution to be $m_{1}^{R}(t,x) = m_{1}^{R^{h}}(t,x) + w(t,x)$. Applying Theorem \ref{thm: general solution} to the inhomogeneous differential equations in Theorem \ref{thm: differential equations first moment} we get Theorem \ref{thm: first moment inhomogeneous space} below.

\begin{theorem}[First Moments in Inhomogeneous Space] \label{thm: first moment inhomogeneous space}
The final solutions for the first moments of $S(t),I(t),$ and $R(t)$ in inhomogeneous space are:
\begin{align*}
m_{1}^{I}(t,x) &= \displaystyle \frac{1}{(2 \pi)^{d}} \int_{T^{d}} e^{\displaystyle [\kappa \hat{\mathcal{L}}(k) + (\beta - \gamma)]t} e^{\displaystyle -ikx} dk,\\
m_{1}^{S}(t,x) &= \displaystyle \frac{1}{(2 \pi)^d} \Bigg[ \int_{T^{d}} \rho_{0} e^{\displaystyle \kappa \hat{\mathcal{L}}(k) t} e^{\displaystyle -ikx}dk\\
&-\beta \displaystyle \int_{0}^{t} \sum\limits_{z \in \mathbb{Z}^{d}} \mathnormal{p} (t-s, 0, x-z) \left(\displaystyle \int_{T^{d}} e^{\displaystyle [\kappa \hat{\mathcal{L}}(k) + (\beta - \gamma)]s} e^{\displaystyle -ikz} dk \right)\Bigg] ds,\\
m_{1}^{R}(t,x) &= \displaystyle \frac{\gamma}{(2 \pi)^{d}} \int_{0}^{t} \sum\limits_{z \in \mathbb{Z}^{d}} \mathnormal{p} (t-s, 0, x-z)
\left(\displaystyle  \int_{T^{d}} e^{\displaystyle [\kappa \hat{\mathcal{L}}(k) + (\beta - \gamma)]s} e^{\displaystyle -ikz} dk \right)ds.
\end{align*}
\end{theorem}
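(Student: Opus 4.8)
The plan is to derive all three formulas from one source, Theorem~\ref{thm: general solution}, after recognizing each equation of Theorem~\ref{thm: differential equations first moment} as an instance of the template~\eqref{eq: differential general equation}. The only tools needed are the Fourier transform of Definition~\ref{def: fourier transform} and its inverse (Definition~\ref{def: inverse fourier transform}), Lemma~\ref{lemma: fourier transform of L} (so that $\mathcal{L}$ becomes multiplication by $\hat{\mathcal{L}}(k)=\hat a(k)\le 0$), and the explicit transition kernel $p(t,x,y)$ of Lemma~\ref{lemma: transition probability}. No new estimates are required; the work is essentially bookkeeping, plus one analytic caveat noted at the end.

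I would begin with $m_1^I$, which decouples from the other two groups. Equation~\eqref{eq: diff eq first moment inf} matches~\eqref{eq: differential general equation} with constant potential $V\equiv\beta-\gamma$ and zero source. Taking the Fourier transform and using $\widehat{\mathcal{L}f}(k)=\hat f(k)\hat{\mathcal{L}}(k)$ turns it into the scalar ODE
\[
\frac{\partial \hat m_1^I(t,k)}{\partial t}=\big(\kappa\hat{\mathcal{L}}(k)+\beta-\gamma\big)\hat m_1^I(t,k),
\]
which, with $\hat m_1^I(0,k)=1$ (the transform of the unit mass placed at the index origin, the convention already used in the text), integrates to $\hat m_1^I(t,k)=e^{[\kappa\hat{\mathcal{L}}(k)+(\beta-\gamma)]t}$. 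Inverting via Definition~\ref{def: inverse fourier transform} gives the first display of the theorem; this reproduces the computation already carried out in the excerpt.

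Next, for $m_1^S$ I would view~\eqref{eq: diff eq first moment sus} as~\eqref{eq: differential general equation} with \emph{no} potential, $V\equiv0$, and source $f(t,x)=-\beta\,m_1^I(t,x)$, and apply Theorem~\ref{thm: general solution}. Because $V\equiv0$, the Feynman--Kac weight $E_x[e^{\int_0^t V(X_s)\,ds}]$ is trivial and $\tilde f=f$, so the representation collapses to the homogeneous part — the free evolution under $\kappa\mathcal{L}$ of the initial datum, which the Fourier computation of the previous paragraph (now with the reaction term deleted) identifies with $\tfrac{1}{(2\pi)^d}\int_{T^d}\rho_0 e^{\kappa\hat{\mathcal{L}}(k)t}e^{-ikx}\,dk=\rho_0\,p(t,0,x)$ — plus the Duhamel term $\int_0^t\sum_{z\in\mathbb{Z}^d}p(t-s,0,x-z)\,\tilde f(s,z)\,ds$. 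Substituting $\tilde f(s,z)=-\beta\,m_1^I(s,z)$ with the integral formula for $m_1^I$ just obtained yields the second display. The recovered group is handled identically with $f(t,x)=\gamma\,m_1^I(t,x)$ and initial datum $R(0,x)=0$: the homogeneous part vanishes, leaving only the Duhamel integral, which is the third display. This closes the derivation of all three formulas.

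The step I expect to require the most care is purely analytic, not algebraic. The susceptible initial profile is taken spatially constant, and $\rho_0\notin\ell^1(\mathbb{Z}^d)$, so strictly speaking $\hat m_1^S(0,\cdot)$ is a point mass and the Fourier identities must be read distributionally on $T^d$; the clean fix is to split $S(0,\cdot)$ into the constant part $\rho_0$, which is annihilated by $\mathcal{L}$ hence transported unchanged, plus an $\ell^1$ perturbation obeying the honest Fourier calculus. One must also justify exchanging $\sum_{z\in\mathbb{Z}^d}$, $\int_{T^d}dk$, and $\int_0^t ds$ in the Duhamel term; this is licit on every finite time horizon by Tonelli, using $p(t,x,y)\ge 0$, $\sum_{x}p(t,x,y)=1$ (which follows from $\hat a(0)=\sum_z a(z)=0$ via Lemma~\ref{lemma: fourier transform of L}), and the bound $|m_1^I(s,z)|\le e^{(\beta-\gamma)s}$ coming from $\hat{\mathcal{L}}(k)\le 0$. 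Once these points are settled, the three formulas follow with no further computation.
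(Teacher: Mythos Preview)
Your proposal is correct and follows essentially the same route as the paper: solve for $m_1^I$ by taking the Fourier transform of~\eqref{eq: diff eq first moment inf}, integrating the resulting scalar ODE, and inverting, then feed this into Theorem~\ref{thm: general solution} with $V\equiv 0$ and source $f=-\beta m_1^I$ (resp.\ $f=\gamma m_1^I$) to obtain $m_1^S$ and $m_1^R$. Your added analytic care about the constant initial datum $\rho_0\notin\ell^1(\mathbb{Z}^d)$ and the interchange of sums and integrals goes beyond what the paper makes explicit, but the argument is otherwise identical.
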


Theorem \ref{thm: first moment inhomogeneous space} serves to elucidate the inherent mechanism of dynamic mobility and its influence on the susceptible, infected, and recovered groups. This establishes a fundamental groundwork for prospective investigations into the consequences of diverse transition probabilities within random motions on the dynamical system. Nonetheless, the equations presented in Theorem \ref{thm: first moment inhomogeneous space} are not immediately apparent, which renders the analysis of the asymptotic behavior of the three groups a challenging endeavor. To enhance the clarity of the solutions, we intend to reformat the equations into a matrix structure and subsequently derive solutions for the first moments. Let $U(t,x) = \begin{bmatrix}
S(t,x)\\
I(t,x)\\
R(t,x)
\end{bmatrix}$ and then $m_{1}^{U}(t,x) = \begin{bmatrix}
m_{1}^{S}(t,x)\\
m_{1}^{I}(t,x)\\
m_{1}^{R}(t,x)
\end{bmatrix}$. Rewriting the differential equations from Theorem \ref{thm: differential equations first moment} we have that
\begin{align}
\displaystyle \frac{\partial \hat{m}_{1}^{U}(t,k)}{\partial t} &= \hat{A}_{1} \hat{m}_{1}^{U}(t,k)\\
\hat{m}_{1}^{U}(t,k) &= e^{\displaystyle \hat{A}_{1} t} x_{0}
\end{align}
where $\hat{A}_{1}$ is the matrix consisting of the coefficients of the differential equations in Theorem \ref{thm: differential equations first moment}. The solution has 2 cases: $\beta=\gamma$ and $\beta \neq \gamma$.\\

When $\beta=\gamma$, the matrix format of the first moments of the susceptible, infected, and recovered groups is:\\
\begin{center}
${m}_{1}^{U}(t,x) = \begin{bmatrix}
\displaystyle \Big(\frac{1}{2 \pi} \Big)^{d} \int_{T^d} \Big(\rho_{0}  e^{\displaystyle \kappa \hat{a}(k)t} -\beta t e^{\displaystyle \kappa \hat{a}(k) t} \Big) e^{\displaystyle -ikx} dk \\
\displaystyle \Big(\frac{1}{2 \pi} \Big)^{d} \int_{T^d} \Big( e^{\displaystyle \kappa \hat{a}(k)t} \Big) \displaystyle e^{\displaystyle -ikx} dk \\
\displaystyle \Big(\frac{1}{2 \pi} \Big)^{d} \int_{T^d} \Big(\gamma t  e^{\displaystyle \kappa \hat{a}(k)t} \Big) e^{\displaystyle -ikx} dk \\ \end{bmatrix}$\\
\end{center}

When $\beta \neq \gamma$, the matrix format is: ${m}_{1}^{U}(t,x)=$ 

\begin{equation}\label{eq: matrix first moments s,i,r}
= \begin{bmatrix}
 \Big(\frac{1}{2 \pi} \Big)^{d} \int_{T^d} \Big[\rho_{0} e^{ \kappa \hat{a}(k)t} + \Big(\frac{\beta}{\beta-\gamma}\Big)  e^{ \kappa \hat{a}(k) t} - \Big(\frac{\beta}{\beta-\gamma}\Big)  e^{ (\kappa \hat{a}(k)+\beta-\gamma)t} \Big] e^{ -ikx} dk \\
\Big(\frac{1}{2 \pi} \Big)^{d} \int_{T^d} \Big(e^{ (\kappa \hat{a}(k)+\beta-\gamma)t} \Big) e^{ -ikx} dk \\
\Big(\frac{1}{2 \pi} \Big)^{d} \int_{T^d} \Big[\Big(\frac{\gamma}{\gamma-\beta}\Big) e^{\kappa \hat{a}(k)t} - \Big(\frac{\gamma}{\gamma-\beta}\Big) e^{ (\kappa \hat{a}(k)+\beta-\gamma)t} \Big] e^{ -ikx} dk 
\end{bmatrix}
\end{equation}

The equations presented in matrix \eqref{eq: matrix first moments s,i,r} offer a more straightforward means of examining the asymptotic trends of the susceptible, infected, and recovered cohorts. Importantly, it should be acknowledged that the formulations in matrix \eqref{eq: matrix first moments s,i,r} are, in fact, synonymous with the solutions outlined in Theorem \ref{thm: first moment inhomogeneous space}. To illustrate this equivalence, we will demonstrate the proof for the matrix-form equation pertaining to the recovered group, which corresponds to $m_{1}^{R}(t,x)$ as established in Theorem \ref{thm: first moment inhomogeneous space}. It is worth noting that analogous derivations can be applied to the other groups.

Recall from Theorem \ref{thm: first moment inhomogeneous space} that\\
\begin{center}
$m_{1}^{R}(t,x) = \displaystyle \frac{\gamma}{(2\pi)^d} \sum\limits_{z \in \mathbb{Z}^d} \int_{T^{d}} \int_{0}^{t} p(t-s,0,x-z) e^{\displaystyle (\kappa\hat{a}(k)+\beta-\gamma)s} e^{\displaystyle -ikz}ds dk$ 
\end{center}
and $p(t-s,0,x-z) = \displaystyle \frac{1}{(2\pi)^d} \int_{T^{d}} e^{\displaystyle ik_{1}(z-x)} e^{\displaystyle \kappa\hat{a}(k_{1})(t-s)}dk_{1}$.\\
Substituting $p(t-s,0,x-z)$ into $m_{1}^{R}(t,x)$, we have that
\begin{align*}
m_{1}^{R}(t,x) &= \displaystyle \frac{\gamma}{(2\pi)^d} \sum\limits_{z \in \mathbb{Z}^d} \int_{T^{d}} \int_{0}^{t} \displaystyle \frac{1}{(2\pi)^d} \int_{T^{d}} e^{\displaystyle -ik_{1}x} e^{\displaystyle \kappa\hat{a}(k_{1})t+(\beta-\gamma)s} dk_{1} ds dk,\\
&= \displaystyle \frac{\gamma}{(2\pi)^d} \sum\limits_{z \in \mathbb{Z}^d} \int_{T^{d}} \Big[\displaystyle \frac{1}{(2\pi)^d} \int_{T^{d}} e^{\displaystyle -ik_{1}x} e^{\displaystyle \kappa\hat{a}(k_{1})t}dk_{1}\Big] \Big[\Big(\displaystyle \frac{1}{\gamma- \beta}\Big) \Big(1-e^{\displaystyle (\beta-\gamma)t}\Big)\Big]dk,\\
&=\displaystyle \frac{\gamma}{(2\pi)^d} \int_{T^{d}} \sum\limits_{z \in \mathbb{Z}^d} p(t,0,x) \Big[\Big(\displaystyle \frac{1}{\gamma- \beta}\Big) \Big(1-e^{\displaystyle (\beta-\gamma)t}\Big)\Big]dk,\\
&=\displaystyle \frac{1}{(2\pi)^d} \int_{T^{d}} \Big(e^{\displaystyle \kappa\hat{a}(k)t} e^{\displaystyle -ikx} \Big)\Big[\Big(\displaystyle \frac{\gamma}{\gamma- \beta}\Big) \Big(1-e^{\displaystyle (\beta-\gamma)t}\Big)\Big]dk,\\
&=\displaystyle \frac{1}{(2\pi)^d} \int_{T^{d}} \Big[\Big(\displaystyle \frac{\gamma}{\gamma- \beta}\Big) \Big(e^{\displaystyle \kappa\hat{a}(k)t}-e^{\displaystyle (\kappa\hat{a}(k) + \beta-\gamma)t}\Big)\Big]e^{\displaystyle -ikx} dk,
\end{align*} which is equivalent to row $3$ in matrix \eqref{eq: matrix first moments s,i,r}. 

\subsection{Analyzing the First Moments}
Having obtained solutions for the first moments of the susceptible, infected, and recovered populations in both homogeneous and inhomogeneous spatial settings, we can now delve into the analysis of their asymptotic trends as time ($t$) approaches infinity in both scenarios. We will focus on the asymptotic behavior of the infected population. Without loss of generality, we consider the epidemic outbreak to begin at location $z=\Vec{0}$, $z\in Z^d$, where there is one infected individual at time $t=0$. 
In a homogeneous space, the asymptotic behavior of the infected population follows the classical SIR model dynamics. If $\beta < \gamma$, the infected population group will vanish as $t \to \infty$. If $\beta = \gamma$, the infected population stabilizes, and for $\beta > \gamma$, the infected population grows exponentially. This reflects the standard outcome of an infection spreading in a homogeneous population without mobility constraints.

In contrast, in an inhomogeneous space, where the mobility effect is represented by $\kappa \hat{a}(k) \leq 0$, $|\kappa \hat{a}|$ measures the strength of mobility across space. When $\theta = \kappa \hat{a}(k) + \beta - \gamma < 0$, implying that mobility outweighs the difference between infection and recovery rates, the infected population at location $x$ vanishes regardless of the relationship between $\beta$ and $\gamma$. This result demonstrates that human mobility can reduce the infection at a particular location, even when the infection rate equals the recovery rate, which differs from the classical SIR model where mobility is not considered.

For the scenario where $\theta = 0$, there are two cases: 
1. If $\kappa \hat{a}(k) = 0$, representing spatial isolation at location $x = 0$ (the origin of the epidemic), the infected population stabilizes at a finite constant $C_1$ at $x=0$, while it remains zero elsewhere. 
2. If $\alpha < 0$, meaning the mobility effect balances the difference between the infection and recovery rates ($|\alpha| = \beta - \gamma$), the infected population reaches a steady state.

In the case where $\theta > 0$ and $\kappa \hat{a}(k) = 0$, spatial isolation restricts the spread at location $x$. When $\beta > \gamma$, the infected population at $x = 0$ grows to infinity as $t \to \infty$, while at $x \neq 0$, the population vanishes. However, if $\alpha < 0$, indicating active mobility, and $\beta > \gamma + |\alpha|$, the infected population will grow exponentially, leading to widespread infection.

In homogeneous space, we have the same basic reproduction number $R_0=\frac{\beta}{\gamma}$ as in the traditional SIR model. If $R_0<1$, the infected population will eventually disappear. If $R_0>1$, an outbreak will occur. When $R_0=1$, the infected population will remain in a steady state, provided there is a sufficiently large population that may become infected.

In inhomogeneous space, we can define a similar reproduction number as in the classical SIR model (\citep{allen2008}):
$$
R_0^{m}=\frac{\kappa \hat{a}(k)+\beta}{\gamma}
$$
where $\kappa$ is the mobility rate, $\hat{a}(k)$ is the Fourier transform of the mobility kernel $a(z), \beta$ is the infection rate, and $\gamma$ is the recovery rate. If $R_0^{m}<1$, then $m_1^I(t, x) \rightarrow 0$, and the infected population will eventually vanish. If $R_0^{m}>1$,
(a) When $\alpha=0$: There will be an outbreak only at the original source location, while other locations remain safe with no infected individuals after a long time.
(b) When $\alpha<0$: Outbreaks will occur in all locations due to the influence of mobility. If $R_0^{m}=1$, the infected population will stay concentrated at the original location in a steady state.

    An intriguing question arises when the mobility kernel $a(z)$ is not symmetric, meaning $a(z) \neq$ $a(-z)$, while still satisfying $\sum_{z \in \mathbb{Z}^d} a(z)=0$ and $\sum_{z \neq 0} a(z)=1$. In this case, the Fourier transform $\hat{a}(k)$ becomes complex-valued:

$$
\hat{a}(k)=\sum_{z \in \mathbb{Z}^d} a(z) e^{i k \cdot z}=\operatorname{Re}[\hat{a}(k)]+i \operatorname{Im}[\hat{a}(k)],
$$
where the real and imaginary parts are given by $\operatorname{Re}[\hat{a}(k)]=\sum_z a(z) \cos (k \cdot z)$ and $\operatorname{Im}[\hat{a}(k)]=\sum_z a(z) \sin (k \cdot z)$, respectively.

The presence of the imaginary part in the exponent modifies the integral for the expectation of the infected population:

$$
m_1^I(t, x)=\frac{1}{(2 \pi)^d} \int_{T^d} e^{[\kappa \operatorname{Re}[\hat{a}(k)]+(\beta-\gamma)] t} e^{i[\kappa \operatorname{Im}[\hat{a}(k)] t-k \cdot x]} d k
$$

This expression splits into two components:

1. \textit{Amplitude Modulation}: The exponential of the real part $\kappa \operatorname{Re}[\hat{a}(k)]$ affects the magnitude of each Fourier mode, influencing the overall growth or decay of the infection.

2. \textit{Phase Shift}: The imaginary part $\kappa \operatorname{Im}[\hat{a}(k)]$ introduces a phase shift, causing the integral to represent a traveling wave or moving distribution, which reflects directional movement in the spread of the disease.

The integral now involves complex exponentials with non-trivial oscillatory behavior, making analytical solutions more challenging. The oscillations can lead to constructive or destructive interference, affecting the shape and amplitude of $m_1^I(t, x)$. We can conjecture that the infection spreads not only due to diffusion but also due to advection, leading to asymmetric spatial distributions.

Considering the asymmetry of $a(z)$, we conjecture that the modified reproduction number becomes
$$
R_0^m=R_0+\frac{\kappa}{\gamma} \max _{k \in T^d} \operatorname{Re}[\hat{a}(k)].
$$

When $\max _{k \in T^d} \operatorname{Re}[\hat{a}(k)]>0$, it is possible that $R_0^m>1>R_0$ or $R_0^m>R_0>1$. Both cases indicate that mobility enhances disease transmission compared to the classical SIR model without mobility. Conversely, when $\max _{k \in T^d} \operatorname{Re}[\hat{a}(k)]<0$, mobility hinders disease spread more than predicted by the classical model.

Investigating the nonsymmetric kernel in the nonlocal model presents an interesting direction for future research. We shall study this case rigorously in future. Understanding how asymmetry in movement patterns affects disease dynamics could provide valuable insights into controlling epidemics in populations with directional biases in mobility.

A succinct overview of the long-term asymptotic behavior of the equations in both homogeneous and inhomogeneous spatial settings is provided in Table \ref{tab:first_moment_asymptotic_horizontal_compact}.

\begin{table}[ht]
\resizebox{\textwidth}{!}{
\begin{tabular}{|c|c|c|c|}
\hline
\textbf{Space Type} & \textbf{Condition} & \textbf{Asymptotic Behavior} & \textbf{Notes} \\ 
\hline
\multirow{3}{*}{Homogeneous} & $\beta < \gamma$ or $R_0<1$ & $m_1^I(t,x) \to 0$ & Infected population goes to zero \\
& $\beta = \gamma$ or $R_0=1$& $m_1^I(t,x) = C_0 \in R $ & Infected population reaches steady state \\
& $\beta > \gamma$ or $R_0>1$ & $m_1^I(t,x) \to e^{(\beta-\gamma)t} \to \infty$ & Infected population grows exponentially \\
\hline
\multirow{3}{*}{Inhomogeneous} & $\theta < 0$ or $R_0^m<1$ & $m_1^I(t,x) \to 0$ & Infected population goes to zero \\
& $\theta = 0$ or $R_0^m=1$ & $m_1^I(t,x) \rightarrow \delta_0(x)$ & Dirac delta function at $x = 0$ \\
&$\alpha = 0$, $\theta > 0$ or $R_0^m>1$ & $\infty$ for $x=0$, $0$ for $x \neq 0$ & Infection only at $x=0$ \\
& $\alpha < 0$, $\theta > 0$ or $R_0^m>1$  & $m_I^1(t,x)\rightarrow \infty$ & Infected population goes to infinity \\
\hline
\end{tabular}
}
\caption{Denote $\theta = \kappa \hat{a}(k) + \beta - \gamma$, $\alpha = \kappa \hat{a}(k)$. $R_0=\frac{\beta}{\gamma}$. $R_0^m=\frac{\kappa\hat{a}(k)+\beta}{\gamma}$. Asymptotic Behavior of $m_1^I(t,x)$ in Homogeneous and Inhomogeneous Space.}
\label{tab:first_moment_asymptotic_horizontal_compact}
\end{table}

This analysis highlights the significant role that mobility plays in disease spread, providing mathematical insights into the interactions between isolation, mobility, and infection dynamics. A key factor is the Fourier transform of the mobility distribution, which allows for the classification of mobility kernels into several groups. By analyzing the Fourier transform, we can predict whether a specific location is at risk for an outbreak. Estimating the transition probability kernel involves using real-world mobility data, such as human movement patterns from GPS data, transportation networks, or census data, and fitting these to mathematical models. Other methods, for example, maximum likelihood estimation (MLE) or Bayesian inference can then be applied to estimate the parameters of the mobility kernel. Mobility can either suppress or amplify infection depending on the balance between infection, recovery rates, and mobility, with significant implications for controlling epidemics in real-world scenarios.

\section{The Second Moments}

In this section, we will solve for and study the second moments of $S(t,x), I(t,x), R(t,x)$ and the corresponding intersecting groups. We denote 
\begin{align*}
&v = \Vert y - x \Vert, \ m_{2}^{S}(t,x,y) = E[S(t,x)S(t,y)], \ m_{2}^{SI}(t,x,y) = E[S(t,x)I(t,y)],\\
&\mathcal{L}f(t,x) = \sum\limits_{z \neq 0} a(z)(f(t,x+z)-f(t,x)),\\
&\mathcal{L}_{x}f(t,x,y) = \sum\limits_{z \neq 0} a(z)(f(t,x+z,y)-f(t,x,y)),\\
&\mathcal{L}_{y}f(t,x,y) = \sum\limits_{z \neq 0} a(z)(f(t,x,y+z)-f(t,x,y)),\\
&\mathcal{L}_{S_{x}}m_{2}^{SI}(t,x,x) = \sum\limits_{z \neq 0} a(z)E[I(t,x)S(t,x+z)-I(t,x)S(t,x)],\\
&\mathcal{L}_{I_{x}} m_{2}^{SI}(t,x,x) = \sum\limits_{z \neq 0} a(z)E[S(t,x)I(t,x+z)-S(t,x)I(t,x)].
\end{align*}

\subsection{Deriving the Differential Equations}
First, we need to derive the differential equations of the second moments for the $S(t,x), I(t,x), R(t,x), S(t,x)I(t,x)$, and $R(t,x)I(t,x)$ groups.

\begin{theorem} \label{thm: diff eq for sec moment}
The differential equations for the second moment among the susceptible, infected, recovered groups are, when $x=y$,
\begin{small}
\begin{align*}
&\displaystyle \frac{\partial m_{2}^{S}(t,x,x)}{\partial t}= 2\kappa \mathcal{L}_{x}m_{2}^{S}(t,x,x) + \kappa \mathcal{L}_{x} m_{1}^{S}(t,x) - 2 \beta m_{2}^{SI}(t,x,x) + 2 \kappa m_{1}^{S}(t,x)+ \beta m_{1}^{I}(t,x).\\
&\displaystyle \frac{\partial m_{2}^{I}(t,x,x)}{\partial t} =  2 \kappa \mathcal{L}_{x} m_{2}^{I}(t,x,x) + \kappa \mathcal{L} m_{1}^{I}(t,x) + 2(\beta - \gamma) m_{2}^{I}(t,x,x) + 2 \kappa m_{1}^{I}(t,x)+ (\beta + \gamma) m_{1}^{I}(t,x).\\
&\displaystyle \frac{\partial m_{2}^{R}(t,x,x)}{\partial t} = 2 \kappa \mathcal{L}_{x} m_{2}^{R}(t,x,x) + \kappa \mathcal{L} m_{1}^{R}(t,x) + 2 \gamma m_{2}^{RI}(t,x,x) + 2 \kappa m_{1}^{R}(t,x)+ \gamma m_{1}^{I}(t,x).\\
&\displaystyle\frac{\partial m_{2}^{SI}(t,x,x)}{\partial t}=  \kappa \mathcal{L}_{S_{x}}m_{2}^{SI}(t,x,x) + \kappa  \mathcal{L}_{I_{x}}m_{2}^{SI}(t,x,x) + (\beta - \gamma)m_{2}^{SI}(t,x,x)- \beta m_{2}^{I}(t,x,x) - \beta m_{1}^{I}(t,x).\\
&\displaystyle\frac{\partial m_{2}^{RI}(t,x,x)}{\partial t} =  \kappa \mathcal{L}_{R_{x}}m_{2}^{RI}(t,x,x) + \kappa  \mathcal{L}_{I_{x}}m_{2}^{RI}(t,x,x) + (\beta - \gamma)m_{2}^{RI}(t,x,x)+ \gamma m_{2}^{I}(t,x,x) - \gamma m_{1}^{I}(t,x).
\end{align*}
\end{small}
When $x \neq y$,
\begin{small}
\begin{align*}
\displaystyle \frac{\partial m_{2}^{S}(t,v)}{\partial t} &= \kappa \mathcal{L}_{x} m_{2}^{S}(t,v) + \kappa \mathcal{L}_{y} m_{2}^{S}(t,v) - 2 \beta m_{2}^{SI}(t,v) - \kappa a(v) m_{1}^{S}(t,x)-\kappa a(v) m_{1}^{S}(t,y).\\
\displaystyle \frac{\partial m_{2}^{I}(t,v)}{\partial t} &= \kappa \mathcal{L}_{x} m_{2}^{I}(t,v) + \kappa \mathcal{L}_{y} m_{2}^{I}(t,v) + 2(\beta- \gamma) m_{2}^{I}(t,v) - \kappa a(v) m_{1}^{I}(t,x)- \kappa a(v) m_{1}^{I}(t,y).\\
\displaystyle \frac{\partial m_{2}^{R}(t,v)}{\partial t} &= \kappa \mathcal{L}_{x} m_{2}^{R}(t,v) + \kappa \mathcal{L}_{y} m_{2}^{R}(t,v) + 2 \gamma m_{2}^{RI}(t,v) -\kappa a(v)m_{1}^{R}(t,x)- \kappa a(v)m_{1}^{R}(t,y).\\
\displaystyle \frac{\partial m_{2}^{SI}(t,v)}{\partial t} &= \kappa \mathcal{L}_{x}m_{2}^{SI}(t,v) + \kappa \mathcal{L}_{y}m_{2}^{SI}(t,v) + (\beta-\gamma)m_{2}^{SI}(t,v) - \beta m_{2}^{I}(t,v). \\
\displaystyle \frac{\partial m_{2}^{RI}(t,v)}{\partial t} &= \kappa \mathcal{L}_{x}m_{2}^{RI}(t,v) + \kappa \mathcal{L}_{y}m_{2}^{RI}(t,v) + (\beta-\gamma)m_{2}^{RI}(t,v) + \gamma m_{2}^{I}(t,v).
\end{align*}
\end{small}
\end{theorem}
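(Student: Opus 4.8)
The plan is to repeat, now at the level of two‑point products, the infinitesimal bookkeeping used to prove Theorem~\ref{thm: differential equations first moment}. Fix types $D,D'\in\{S,I,R\}$ and sites $x,y\in\mathbb Z^d$, and write $D(t+dt,x)=D(t,x)+\xi^D_x$ and $D'(t+dt,y)=D'(t,y)+\xi^{D'}_y$, where the increments $\xi$ are exactly those listed in the proof of Theorem~\ref{thm: differential equations first moment}. Conditioning on the configuration $\mathcal F_t$ at time $t$, one has, as recorded there, $E[\xi^S_x\mid\mathcal F_t]=\big(\kappa\mathcal L S(t,x)-\beta I(t,x)\big)dt$, $E[\xi^I_x\mid\mathcal F_t]=\big(\kappa\mathcal L I(t,x)+(\beta-\gamma)I(t,x)\big)dt$, and $E[\xi^R_x\mid\mathcal F_t]=\big(\kappa\mathcal L R(t,x)+\gamma I(t,x)\big)dt$. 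Expanding the product,
\[
D(t+dt,x)D'(t+dt,y)=D(t,x)D'(t,y)+D(t,x)\xi^{D'}_y+D'(t,y)\xi^D_x+\xi^D_x\xi^{D'}_y,
\]
then taking $E[\cdot\mid\mathcal F_t]$ followed by $E[\cdot]$, subtracting $m_2^{DD'}(t,x,y)$, dividing by $dt$ and letting $dt\to0$, will produce each displayed equation; the task is to identify the three resulting contributions.

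The two \emph{linear} terms are routine. From the conditional means above, the migration part of $E[D(t,x)\xi^{D'}_y\mid\mathcal F_t]$ is $\kappa\,D(t,x)\,\mathcal L_y D'(t,y)\,dt$, which after taking full expectations becomes $\kappa\mathcal L_y m_2^{DD'}(t,x,y)\,dt$ directly from the definition of $\mathcal L_y$ (and of $\mathcal L_{S_x},\mathcal L_{I_x},\dots$ when $x=y$), while the reaction part becomes the appropriate $\pm\beta$ or $\pm\gamma$ multiple of a two‑point function ($m_2^{SI}$, $m_2^{I}$ or $m_2^{RI}$ as the case may be). Here one uses the symmetries $m_2^{DD}(t,a,b)=m_2^{DD}(t,b,a)$ and $a(z)=a(-z)$ to rewrite $E[D(t,x)\mathcal L D'(t,y)]$ as an $\mathcal L_x$‑ or $\mathcal L_y$‑type operator applied to the two‑point function. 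The symmetric term $D'(t,y)\xi^D_x$ is handled identically, and summing the two yields the $\kappa\mathcal L_x+\kappa\mathcal L_y$ structure for $x\ne y$ (resp. $2\kappa\mathcal L_x$ for $x=y$) together with the coefficients $2(\beta-\gamma)$, $2\gamma$, etc.

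The substance is the \emph{quadratic} cross term $\xi^D_x\xi^{D'}_y$, which is where the ``only one event in $(t,t+dt)$'' hypothesis enters. For $x\ne y$, a single admissible event changes both $D(t,x)$ and $D'(t,y)$ only if it transports a particle directly between $x$ and $y$; this forces $D=D'$ and the event to be one of the jumps $(D,x)\to(D,y)$ or $(D,y)\to(D,x)$, each giving $\xi^D_x\xi^D_y=-1$ with conditional probability $\kappa a(v)D(t,x)\,dt$ and $\kappa a(v)D(t,y)\,dt$, so $E[\xi^D_x\xi^D_y\mid\mathcal F_t]=-\kappa a(v)\big(D(t,x)+D(t,y)\big)dt$; taking expectations gives the $-\kappa a(v)m_1^D(t,x)-\kappa a(v)m_1^D(t,y)$ terms, and $0$ in the $m_2^{SI},m_2^{RI}$ equations. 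For $x=y$ with $D=D'$, the cross term is $(\xi^D_x)^2\in\{0,1\}$, hence its conditional mean is the \emph{total} instantaneous rate of events at $x$ affecting $D$; collecting in‑migration, out‑migration and the reaction rate, and using $\sum_{z\ne0}a(z)=1$ together with $\sum_{z\ne0}a(z)f(x+z)=\mathcal L f(x)+f(x)$, turns this into $\kappa\mathcal L m_1^D(t,x)+2\kappa m_1^D(t,x)$ plus the reaction contribution, namely $\beta m_1^I$ for $D=S$, $(\beta+\gamma)m_1^I$ for $D=I$, and $\gamma m_1^I$ for $D=R$. Finally, for $x=y$ and a mixed pair ($SI$ or $RI$), the only single event moving both coordinates is the on‑site transition $(S,x)\to(I,x)$ (resp. $(I,x)\to(R,x)$), giving $\xi^D_x\xi^{D'}_x=-1$ with conditional probability $\beta I(t,x)\,dt$ (resp. $\gamma I(t,x)\,dt$), which yields the $-\beta m_1^I(t,x)$ (resp. $-\gamma m_1^I(t,x)$) term.

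Assembling the linear and quadratic contributions in each of the ten cases ($S,I,R,SI,RI$, for $x=y$ and for $x\ne y$) gives the stated system. I expect the only real obstacle to be combinatorial: correctly deciding, for every event in the list (i)--(viii) applied at $x$, at $y$, or at a common site, whether it changes one or two of the tracked coordinates, and noticing that the apparent overlap---$\xi^{D'}_y$ already involves $D(t,x)$ whenever $a(y-x)\ne0$---is harmless, since that dependence is exactly the $z=x-y$ term of $\mathcal L_y m_2^{DD'}$ and is not double counted by the $\xi^D_x\xi^{D'}_y$ term.
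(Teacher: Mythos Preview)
Your proposal is correct and follows essentially the same infinitesimal Kolmogorov forward-equation bookkeeping as the paper: write $D(t+dt,\cdot)=D(t,\cdot)+\xi$, expand the product, take expectations, divide by $dt$. The only organizational difference is that the paper, for $x\ne y$, explicitly enumerates all seven joint probabilities $P(\xi(dt,x)=i,\xi(dt,y)=j)$ before expanding, whereas you split the expansion into the two linear pieces (handled by reusing the conditional means already computed for Theorem~\ref{thm: differential equations first moment}) plus the quadratic cross term $\xi^D_x\xi^{D'}_y$ (handled by asking which single event moves both coordinates); this is the same computation, and your packaging is if anything slightly more economical.
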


\begin{proof} For the susceptible group, when deriving the differential equations for the second moment, we are going to use a similar method to the one used for the first moment - the Kolmogorov forward Equations. There are 2 cases: when the locations are equivalent and $x=y$, and when the locations are different and $x\neq y$:\\
Case 1: $S(t+dt,x)$ when $x = y$, then $m_{2}(t+dt,x,y) = E[S^{2}(t+dt, x, x)]$\\
For the second moment, when $x=y$, we have that\\
$E[S^{2}(t+dt,x)] = E[(S(t,x)+ \xi(dt))^{2}]$, where
\vspace{-3mm}
$$\xi (dt)= 
 \begin{cases}
        1 \hspace{.25cm} w.p \hspace{.25cm}\indent \sum\limits_{z \neq 0}S(t, x+z) \kappa a(z) dt \hspace{.25cm} \text{for event (ii)}\\
        -1 \hspace{.25cm} w.p \hspace{.25cm} \sum\limits_{z \neq 0}S(t, x) \kappa a(z) dt + I(t,x) \beta dt \hspace{.25cm} \text{for events (i),(vii)}\\
        0 \hspace{.25cm} \text{otherwise}
    \end{cases}$$
The case that $\xi(dt)=1$ is the event that a particle at location $x+z$ in the susceptible group moves to location $x$, meaning location $x$ gains a particle, and thus the event has probability $\sum\limits_{z \neq 0}S(t, x+z) \kappa a(z) dt$. The case that $\xi(dt)=-1$ is the event that either a particle at location $x$ moves to location $x+z$ (meaning that location $x$ loses a particle, which has probability $\sum\limits_{z \neq 0}S(t, x) \kappa a(z) dt$), or a particle at location $x$ in the susceptible group becomes infected (which has probability $I(t,x) \beta dt$). The case that $\xi(dt)=0$ is the event that there is no particle moving to or away from location $x$ in the susceptible group, so it has probability $1- \sum\limits_{z \neq 0}S(t, x+z) \kappa a(z) dt - \sum\limits_{z \neq 0}S(t, x) \kappa a(z) dt - I(t,x) \beta dt$. Using the Kolmogorov Forward Equations and we have that\\

$E[S^{2}(t+dt,x)] = E[S^{2}(t,x)] + 2E\big[S(t,x)(1)(\sum\limits_{z \neq 0}S(t, x+z) \kappa a(z) dt) +\\
S(t,x)(-1)(\sum\limits_{z \neq 0}S(t, x) \kappa a(z) dt + I(t,x) \beta dt) + S(t,x)(0)[1 - \sum\limits_{z \neq 0}S(t, x+z) \kappa a(z) dt - (\sum\limits_{z \neq 0}S(t, x) \kappa a(z) dt + I(t,x) \beta dt)]\big] + E\big[(1)^{2}(\sum\limits_{z \neq 0}S(t, x+z) \kappa a(z) dt) + (-1)^{2}(\sum\limits_{z \neq 0}S(t, x) \cdot\\
\kappa a(z) dt + I(t,x) \beta dt) + (0)^{2} [1 - \sum\limits_{z \neq 0}S(t, x+z) \kappa a(z) dt - (\sum\limits_{z \neq 0}S(t, x) \kappa a(z) dt + I(t,x) \beta dt)]\big]$.

Hence
\begin{small}
\begin{equation*}
    \displaystyle \frac{\partial m_{2}^{S}(t,x,x)}{\partial t}= 2\kappa \mathcal{L}_{x}m_{2}^{S}(t,x,x) + \kappa \mathcal{L} m_{1}^{S}(t,x) - 2 \beta m_{2}^{SI}(t,x,x) + 2 \kappa m_{1}^{S}(t,x) + \beta m_{1}^{I}(t,x)
\end{equation*}
\end{small}
Case 2: $S(t+dt,x)$ when $x \neq y$, then $m_{2}(t+dt,x,y) = E[S(t+dt,x)S(t+dt,y)]$\\
For the second moment when $x \neq y$ we have that,
\begin{center}
$E[S(t+dt,x)S(t+dt,y)] = E[(S(t,x)+ \xi(dt,x))(S(t,y)+ \xi(dt,y))]$
\end{center}
Recall that during $(t, t+dt)$ only one event can happen, either a particle can move or it can jump states, therefore there are several combinations for $x$ and $y$. For example, $(x=1,y=-1)$, $(x=1,y=0)$, so on and so forth. The probabilities for the various combinations of $\xi(dt,x)$ and $\xi(dt,y)$ are:
\begin{itemize}
\item The event that a particle in the susceptible group goes from $y$ to $x$:
\begin{center}
$P(\xi(dt,x)= 1, \xi(dt,y)= -1)= \kappa a(x-y) S(t,y)dt$
\end{center}
\item The event that a particle in the susceptible group goes from $x+z$ to $x$ but not to $y$:
\begin{center}
$P(\xi(dt,x)= 1, \xi(dt,y)= 0)= \kappa \sum\limits_{z \neq 0} a(z) S(t,x+z)dt - \kappa a(x-y) S(t,y)dt$
\end{center}
\item The event that a particle in the susceptible group goes from $x$ to $y$:
\begin{center}
$P(\xi(dt,x)= -1, \xi(dt,y)= 1)= \kappa a(x-y) S(t,x)dt$
\end{center}
\item The event that a particle in the susceptible group goes from $x$ to $x+z$ but not to $y$ or a particle at location $x$ transitions $S \rightarrow I$:
\begin{center}
$P(\xi(dt,x)= -1, \xi(dt,y)= 0)= \kappa \sum\limits_{z \neq 0} a(z) S(t,x)dt - \kappa a(x-y) S(t,x)dt + \beta I(t,x)dt$
\end{center}
\item The event that a particle in the susceptible group goes from $y+z$ to $y$ but not to $x$:
\begin{center}
$P(\xi(dt,x)= 0, \xi(dt,y)= 1)= \kappa \sum\limits_{z \neq 0} a(z) S(t,y+z)dt - \kappa a(x-y) S(t,x)dt$
\end{center}
\item The event that a particle in the susceptible group goes from $y$ to $y+z$ but not to $x$ or a particle at location $y$ transitions $S \rightarrow I$:
\begin{center}
$P(\xi(dt,x)= 0, \xi(dt,y)= -1)= \kappa \sum\limits_{z \neq 0} a(-z) S(t,y)dt - \kappa a(x-y) S(t,y)dt + \beta I(t,y)dt$ 
\end{center}
\item The event that no particle moves in the susceptible group:
\begin{align*}
P(\xi(dt,x)= 0, \xi(dt,y)= 0) &= 1 - \kappa \sum\limits_{z \neq 0} a(z)S(t,y+z)dt - \kappa \sum\limits_{z \neq 0} a(z)S(t,y)dt\\
&- \kappa \sum\limits_{z \neq 0} a(z)S(t,x+z)dt- \kappa \sum\limits_{z \neq 0} a(z)S(t,x)dt\\
&+ \kappa a(x-y) S(t,y)dt + \kappa a(x-y) S(t,x)dt\\
&- \beta I(t,x)dt - \beta I(t,y)dt
\end{align*}
\end{itemize}

Using the Kolmogorov Forward Equations, multiply out the quantities and cancel terms, re-write sums in terms of our Laplace operators defined in the beginning of the chapter, distribute the expectation and divide both sides by $dt$, and let $v = \Vert y-x \Vert$ we get that
\begin{small}
\begin{align*}
\displaystyle \frac{\partial m_{2}^{S}(t,v)}{\partial t} &= \kappa \mathcal{L}_{x} m_{2}^{S}(t,v) + \kappa \mathcal{L}_{y} m_{2}^{S}(t,v) - 2 \beta m_{2}^{SI}(t,v)- \kappa a(v) m_{1}^{S}(t,x) -\kappa a(v) m_{1}^{S}(t,y)
\end{align*}
\end{small}
\end{proof}

The proof for deriving the differential equations for the infected, susceptible-infected, recovered, and recovered-infected groups follows in a similar process as the susceptible group with different probabilities for $\xi(dt)$ depending on the group. 

\subsection{Solving for the Second Moments}

In epidemic models, the most important group is the infected population. Here we will only list the equations for the infected population. However, solving solve the differential equations of second moments for the $S(t,x,y), R(t,x,y), S(t,x)I(t,y)$, and $R(t,x)I(t,y)$ groups follows in a similar manner. We will utilize regular ODE methods and apply Theorem \ref{thm: general solution} to solve for the second moment of the infected population. Each of the second moments has 2 cases: when the locations $x=y$ and when the locations $x \neq y$ and each case has 2 subcases: homogeneous space and inhomogeneous space.

\begin{theorem}\label{thm: 2nd moment}
The second moments for the infected groups when $x=y$ and $x \neq y$ are:
\begin{align*}
\text{Homogeneous space}:\\
m_{2}^{I}(t,x,x) &= \rho_{0} e^{\displaystyle 2 (\beta-\gamma)t} + \Big(\displaystyle \frac{\beta + \gamma + 2\kappa}{-\beta + \gamma}\Big) \Big[ e^{\displaystyle (\beta-\gamma)t} - e^{\displaystyle 2(\beta-\gamma)t} \Big].\\
\text{Inhomogeneous space}:\\
m_{2}^{I}(t,x,x) &= \displaystyle \frac{1}{(2 \pi)^d} \int_{T^d} \rho_{0} e^{\displaystyle [2\kappa \hat{\mathcal{L}}_{x}(k) + 2(\beta-\gamma)]t} e^{\displaystyle -ikx}dk\\
&+ e^{\displaystyle 2(\beta-\gamma)t} \displaystyle\int_{0}^{t} \sum\limits_{z \in \mathbb{Z}^d} p(t-s,0,x-z)\Big[\kappa \mathcal{L}m_{1}^{I}(s,z)\\
&+(\beta+\gamma+2\kappa)m_{1}^{I}(s,z)\Big]\Big(e^{\displaystyle 2(\beta-\gamma)s}\Big)ds.\\
\text{Homogeneous space}:\\
m_{2}^{I}(t,v) &=  \rho_{0} e^{\displaystyle 2 (\beta-\gamma)t} + \Big(\displaystyle \frac{2 \kappa a(v)}{(\beta-\gamma)}\Big) \Big[ e^{\displaystyle (\beta-\gamma)t} - e^{\displaystyle 2 (\beta - \gamma)t} \Big].\\
\text{Inhomogeneous space}:\\
m_{2}^{I}(t,v) &=\displaystyle \frac{1}{(2\pi)^d} \int_{T^d} \rho_{0} e^{\displaystyle [\kappa \hat{\mathcal{L}}_{x}(k) + \kappa \hat{\mathcal{L}}_{y}(k) + 2 (\beta- \gamma)]t} e^{\displaystyle -ikv}dk\\
&+e^{\displaystyle 2(\beta-\gamma)t} \displaystyle\int_{0}^{t} \sum\limits_{z \in \mathbb{Z}^d} p(t-s,0,x-z)\Big[-2 \kappa a(v) m_{1}^{I}(s,z)\Big]\\
&\cdot \Big(e^{\displaystyle 2(\beta-\gamma)s}\Big)ds.
\end{align*}
\end{theorem}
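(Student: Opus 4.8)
The strategy is to exploit a structural feature of Theorem~\ref{thm: diff eq for sec moment}: among all the second-moment equations, the one governing $m_2^I$ is \emph{self-contained}. Its right-hand side involves only $m_2^I$ itself and the first moment $m_1^I$ --- there is no coupling to the cross moments $m_2^{SI}$ or $m_2^{RI}$, in contrast with the equations for $m_2^S$ and $m_2^R$. Since $m_1^I$ has already been obtained in closed form (Theorem~\ref{thm:first moment homogeneous space} in homogeneous space, Theorem~\ref{thm: first moment inhomogeneous space} in inhomogeneous space), the $m_2^I$ equation in each of the four regimes is a single linear evolution equation with a \emph{known} inhomogeneous source term, and the whole theorem reduces to solving four such equations.

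For the two homogeneous cases one uses the defining property of homogeneous space, $\mathcal{L}\equiv 0$, so that the partial differential equations collapse to scalar linear ODEs. When $x=y$ the equation becomes $\dfrac{d}{dt} m_2^I = 2(\beta-\gamma) m_2^I + (\beta+\gamma+2\kappa)\, m_1^I$ with $m_1^I(t) = e^{(\beta-\gamma)t}$ from Theorem~\ref{thm:first moment homogeneous space}; multiplying by the integrating factor $e^{-2(\beta-\gamma)t}$, integrating from $0$ to $t$ against the prescribed initial datum $\rho_0$, and simplifying yields the stated formula, the coefficient $\tfrac{\beta+\gamma+2\kappa}{-\beta+\gamma}$ arising from $\int_0^t e^{-(\beta-\gamma)s}\,ds$. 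When $x\neq y$ the source is $-\kappa a(v)\big(m_1^I(t,x)+m_1^I(t,y)\big) = -2\kappa a(v)\, e^{(\beta-\gamma)t}$, and the identical integrating-factor computation produces the formula with coefficient $\tfrac{2\kappa a(v)}{\beta-\gamma}$. Both are routine ODE calculations.

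For the inhomogeneous cases the plan is to recognize each equation as an instance of the general inhomogeneous equation solved in Theorem~\ref{thm: general solution}. When $x=y$, the equation reads $\partial_t m_2^I = (2\kappa)\mathcal{L}_x m_2^I + V\, m_2^I + f$, with constant potential $V = 2(\beta-\gamma)$ and explicit forcing $f(t,z) = \kappa \mathcal{L} m_1^I(t,z) + (\beta+\gamma+2\kappa)\, m_1^I(t,z)$, where $m_1^I$ is the Fourier-integral expression of Theorem~\ref{thm: first moment inhomogeneous space}. Two observations make Theorem~\ref{thm: general solution} directly applicable: $2\kappa \mathcal{L}_x$ is again the generator of a (rate-rescaled) random walk on $\mathbb{Z}^d$, and, because $V$ is constant, the Feynman--Kac factor is deterministic, $E_x\big[e^{\int_0^t V(X_s)\,ds}\big] = e^{2(\beta-\gamma)t}$. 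Substituting into the Duhamel representation of Theorem~\ref{thm: general solution}, together with the transition kernel $p(t-s,0,x-z)$ from Lemma~\ref{lemma: transition probability} and the Fourier form of $m_1^I$, gives exactly the displayed expression. For $x\neq y$ the mobility operator is $\kappa \mathcal{L}_x + \kappa \mathcal{L}_y$, the generator of a pair of independent walks on $\mathbb{Z}^d\times\mathbb{Z}^d$; restricting to functions of the displacement $v$ and using the kernel symmetry $\hat{\mathcal{L}}_x(k) = \hat{\mathcal{L}}_y(k) = \hat a(k)$ (Lemma~\ref{lemma: fourier transform of L}), the same theorem applies with forcing $-\kappa a(v)\big(m_1^I(t,x)+m_1^I(t,y)\big)$ and yields the final formula, now with symbol $\kappa\hat{\mathcal{L}}_x(k)+\kappa\hat{\mathcal{L}}_y(k)+2(\beta-\gamma)$ in the exponent.

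The genuine content --- and the step I expect to need the most care --- is not the ODE or Fourier bookkeeping but the justification that the two-point operators $2\kappa\mathcal{L}_x$ and $\kappa\mathcal{L}_x+\kappa\mathcal{L}_y$ fall within the scope of Theorem~\ref{thm: general solution}: one must verify that each is the generator of a bona fide Markov random walk (so that the Feynman--Kac / Duhamel derivation of that theorem transfers verbatim), identify the correct underlying process and initial data, and confirm that $m_2^I(t,x,y)$ genuinely depends only on the displacement $v$ so that a single Fourier variable suffices. The inhomogeneous $x\neq y$ case is the most delicate on all these points; the remaining cases are variations on the same theme, and the reductions to the displayed closed forms are then mechanical.
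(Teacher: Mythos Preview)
Your proposal is correct and follows essentially the same route as the paper: the four cases are split exactly as you describe, the homogeneous cases are solved as scalar linear ODEs (the paper does not even name the integrating factor but the computation is identical), and the inhomogeneous cases are handled by invoking Theorem~\ref{thm: general solution} with constant potential $V=2(\beta-\gamma)$ and the forcing terms you identify. Your closing remarks about verifying that $2\kappa\mathcal{L}_x$ and $\kappa\mathcal{L}_x+\kappa\mathcal{L}_y$ fall within the scope of Theorem~\ref{thm: general solution}, and that $m_2^I(t,x,y)$ depends only on $v$, are points the paper glosses over, so your treatment is in fact more careful than the original.
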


\begin{proof}
Recall the differential equations for the infected group from Theorem \ref{thm: diff eq for sec moment} and for each equation $x=y$ and $x \neq y$ there are 2 cases: homogeneous space and inhomogeneous space.

Case 1a: Homogeneous space when $x=y$ (then the spaces $x$ and $x+z$ are equivalent and $\mathcal{L}_{x} m_{2}^{I}(t,x,x)=0= \mathcal{L} m_{1}^{I}(t,x)$).\\

From Theorem \ref{thm: diff eq for sec moment} we have $\displaystyle \frac{\partial m_{2}^{I}(t,x,x)}{\partial t} =  2(\beta - \gamma) m_{2}^{I}(t,x,x) + (\beta + \gamma + 2\kappa) e^{\displaystyle (\beta-\gamma)t}$ with initial condition $m_{2}^{I}(0,x,x) = \rho_{0}>0$. Thus,
\begin{center}
$m_{2}^{I}(t,x,x) = \rho_{0} e^{\displaystyle 2 (\beta-\gamma)t} + \Big(\displaystyle \frac{\beta + \gamma + 2\kappa}{-\beta + \gamma}\Big) \Big[ e^{\displaystyle (\beta-\gamma)t} - e^{\displaystyle 2(\beta-\gamma)t} \Big].$
\end{center}

Case 1b: Homogeneous space when $x \neq y$ (then $\mathcal{L}_{x}m_{2}^{I}(t,v) = 0$ and $m_{1}^{I}(t,x)=m_{1}^{I}(t,y)= e^{\displaystyle (\beta-\gamma)t}$):
\begin{align*}
\displaystyle \frac{\partial m_{2}^{I}(t,v)}{\partial t} &= 2(\beta - \gamma) m_{2}^{I}(t,v) - 2 \kappa a(v) e^{\displaystyle (\beta-\gamma)t},\\
m_{2}^{I}(0,x,x) &= \rho_{0}>0.
\end{align*}
When we solve this ODE, we get
\begin{center}
$m_{2}^{I}(t,v) = \rho_{0} e^{\displaystyle 2 (\beta-\gamma)t} + \Big(\displaystyle \frac{2 \kappa a(v)}{(\beta-\gamma)}\Big) \Big[ e^{\displaystyle (\beta-\gamma)t} - e^{\displaystyle 2 (\beta - \gamma)t} \Big]$.
\end{center}

Case 2a: In-homogeneous space when $x=y$ (then the spaces $x$ and $x+z$ are not equivalent and $\mathcal{L}_{x} m_{2}^{I}(t,x,x) \neq 0$):
\begin{align*}
\displaystyle \frac{\partial m_{2}^{I}(t,x,x)}{\partial t} &=  2 \kappa \mathcal{L}_{x} m_{2}^{I}(t,x,x) + \kappa \mathcal{L} m_{1}^{I}(t,x) + 2(\beta - \gamma) m_{2}^{I}(t,x,x)\\
& + (\beta + \gamma +2\kappa) m_{1}^{I}(t,x),\\ 
m_{2}^{I}(0,x,x) &= \rho_{0}>0.
\end{align*}
Utilizing the general solution for inhomogeneous equations from Theorem \ref{thm: general solution}, we have that
\begin{align*}
m_{2}^{I}(t,x,x) &= \displaystyle \frac{1}{(2 \pi)^d} \int_{T^d} \rho_{0} e^{\displaystyle [2\kappa \hat{\mathcal{L}}_{x}(k) + 2(\beta-\gamma)]t} e^{\displaystyle -ikx}dk\\
&+ e^{\displaystyle 2(\beta-\gamma)t} \displaystyle\int_{0}^{t} \sum\limits_{z \in \mathbb{Z}^d} p(t-s,0,x-z)\Big[\kappa \mathcal{L}m_{1}^{I}(s,z)+(\beta+\gamma+2\kappa)m_{1}^{I}(s,z)\Big]\\\
&\cdot \Big(e^{\displaystyle 2(\beta-\gamma)s}\Big)ds.
\end{align*}

Case 2b: Inhomogeneous space when $x \neq y$ (then $\mathcal{L}_{x}m_{2}^{I}(t,v) \neq 0$):
\begin{align*}
\displaystyle \frac{\partial m_{2}^{I}(t,v)}{\partial t} &= \kappa \mathcal{L}_{x} m_{2}^{I}(t,v) + \kappa \mathcal{L}_{y} m_{2}^{I}(t,v) + 2(\beta- \gamma) m_{2}^{I}(t,v)\\
&- \kappa a(v) m_{1}^{I}(t,x) - \kappa a(v) m_{1}^{I}(t,y),\\
m_{2}^{I}(0,x,x) &= \rho_{0}>0.
\end{align*}
When we apply Theorem \ref{thm: general solution} to the differential equation, we have that
\begin{align*}
m_{2}^{I}(t,v) &= \displaystyle \frac{1}{(2\pi)^d} \int_{T^d} \rho_{0} e^{\displaystyle [\kappa \hat{\mathcal{L}}_{x}(k) + \kappa \hat{\mathcal{L}}_{y}(k) + 2 (\beta- \gamma)]t} e^{\displaystyle -ikv}dk\\
&+ e^{\displaystyle 2(\beta- \gamma) t} \displaystyle \int_{0}^{t} \sum\limits_{z \in \mathbb{Z}^d} p(t-s,0,x-z) \Big[-2 \kappa a(v)m_{1}^{I}(s,z)\Big]\Big(e^{\displaystyle 2(\beta-\gamma)s}\Big)ds.
\end{align*}
\end{proof}

Solving the differential equations for the second moments of the susceptible, recovered, susceptible-infected, and recovered-infected groups follows the same procedure as solving for the second moment of the infected group.

Utilizing the same matrix method used for the first moments, except we have multiple cases for the second moments, we can solve for the second moments of the infected group explicitly.

When $\beta= \gamma$ and $x=y$,\\
$m_{2}^{I}(t,x,x) = \Big(\frac{1}{2\pi}\Big)^{d} \int_{T^d} \Big\{ e^{ 2 \kappa \hat{a}(k)t} - \Big(\frac{\kappa\hat{a}(k)+2\kappa+\beta+\gamma}{\kappa\hat{a}(k)}\Big) \Big[e^{\displaystyle \kappa \hat{a}(k)t} - e^{\displaystyle 2\kappa \hat{a}(k)t} \Big] \Big\} e^{\displaystyle -ikx}dk$.\\

When $\beta \neq \gamma$ and $x=y$,\\
$\hat{m}_{2}^{I}(t,x,k) = 
e^{\displaystyle 2(\kappa\hat{a}(k)t+\beta+\gamma)t} - \Big(\frac{\kappa\hat{a}(k)+2\kappa+\beta+\gamma}{\kappa\hat{a}(k)+2\beta+2\gamma}\Big)\Big[e^{\displaystyle \kappa\hat{a}(k)t} - e^{\displaystyle 2(\kappa\hat{a}(k)+\beta+\gamma)t}\Big]$,\\

$m_{2}^{I}(t,x,x)= \displaystyle \Big(\frac{1}{2\pi}\Big)^{d} \int_{T^d} \hat{m}_{2}^{I}(t,x,k) e^{\displaystyle -ikx} dk$.

When $\beta=\gamma$ and $x \neq y$,\\
$\hat{m}_{2}^{I}(t,k)= 
e^{\displaystyle 2 \kappa \hat{a}(k)t} + 2 \Big[e^{\displaystyle \kappa \hat{a}(k)t} - e^{\displaystyle 2\kappa \hat{a}(k)t} \Big]$,\\

$m_{2}^{I}(t,v)= \displaystyle \Big(\frac{1}{2\pi}\Big)^{d} \int_{T^d} \hat{m}_{2}^{I}(t,k) e^{\displaystyle -ikv} dk$.\\

When $\beta \neq \gamma$ and $x \neq y$,\\
$\hat{m}_{2}^{I}(t,k) = 
e^{\displaystyle 2(\kappa\hat{a}(k)t+\beta+\gamma)t} + \Big(\frac{2 \kappa\hat{a}(k)}{\kappa\hat{a}(k)+2\beta+2\gamma}\Big)\Big[e^{\displaystyle \kappa\hat{a}(k)t} - e^{\displaystyle 2(\kappa\hat{a}(k)+\beta+\gamma)t}\Big]$.\\

\subsection{Analyzing the Second Moments}

We can analyze the asymptotic behavior as $t \rightarrow \infty$ for both the cases (homogeneous space and inhomogeneos space), each with two subcases when $x=y$ and when $x \neq y$.

The behavior of the second moments of the susceptible, infected, recovered, susceptible-infected, recovered-infected groups in the homogeneous space as $t \longrightarrow \infty$ can be broken up in 3 cases: $\beta<\gamma, \beta=\gamma, \beta>\gamma$. The asymptotic behavior of  $m_{2}^{I}(t,x,x)$ and $m_{2}^{I}(t,v)$, where $v = \Vert y - x \Vert$ in homogeneous space as $t \longrightarrow \infty$ is summarized in the Table \ref{tab: asymptotic sec moment homogenous} below.

\begin{table}[ht]
\begin{tabular}{ | c | c | c | }
\hline
 As $t \rightarrow \infty$ & $m_{2}^{I}(t,x,x)$ & $m_{2}^{I}(t,v)$\\
\hline
$\beta > \gamma$ & $ \infty$ & $ \infty$\\ 
\hline
$\beta < \gamma$ & $ 0$ & $ 0$ \\
\hline
$\beta = \gamma$ & $ \infty$ & $ 0$ \\
\hline
\end{tabular}
\caption{Asymptotic Behavior of the Second Moments.}
\label{tab: asymptotic sec moment homogenous}
\end{table}

The asymptotic behavior of the second moments of the infected group in inhomogeneous space as $t \longrightarrow \infty$ is summarized in Table \ref{tab:asymptotic sec moment inhomogeneous} below. Let $\alpha = \kappa \hat{a}(k)$, $\theta = \kappa\hat{a}(k)+\beta-\gamma$, $\mu = \kappa\hat{a}(k)+\beta+\gamma$, $D_{1}= \displaystyle \frac{1}{(2\pi)^d}\int_{T^d}e^{\displaystyle -ikx} dk = \delta_0(x)$, $D_{2}= \displaystyle \frac{1}{(2\pi)^d}\int_{T^d} \Big(\displaystyle \frac{\kappa+\beta+\gamma}{2\beta+2\gamma}\Big) e^{\displaystyle -ikx}dk = \Big(\displaystyle \frac{\kappa+\beta+\gamma}{2\beta+2\gamma}\Big) \delta_0(x)$, and $D_{3}= \displaystyle \frac{1}{(2\pi)^d}\int_{T^d}e^{\displaystyle -ikv} dk= \delta_0(v)$.

\begin{table}[ht]
\begin{tabular}{ | c | c | c |  }
\hline
 As $t \rightarrow \infty$ & $m_{2}^{I}(t,x,x)$ & $m_{2}^{I}(t,v) $\\
\hline
$\beta - \gamma=0$, $\alpha < 0$, $\theta < 0 $ &  $ 0$ & $ 0$\\ 
\hline
$\beta - \gamma=0$, $\alpha = 0$, $\theta = 0$ & $\delta_0(x)$ & $\delta_0(v)$ \\ 
\hline 
$\beta + \gamma < 0$, $\alpha < 0$, $\mu < 0$ & $ 0$ & $0$ \\ 
\hline
$\beta + \gamma < 0$, $\alpha = 0$, $\mu<0$ & $ \Big( \frac{\kappa+\beta+\gamma}{2\beta+2\gamma}\Big) \delta_0(x) $ & $ 0$ \\
\hline 
$\beta + \gamma > 0$, $\alpha < 0$, $\mu <0$ & $ 0$ & $ 0$ \\
\hline
$\beta + \gamma > 0$, $\alpha \leq 0$, $\mu>0$ & $ \infty$ & $\infty$ \\
\hline
\end{tabular}
\caption{Asymptotic Behavior of the Second Moments in Inhomogeneous Space. }
\label{tab:asymptotic sec moment inhomogeneous}
\end{table}

Notice that when $\beta + \gamma >0, \alpha < 0$ and $\mu = \kappa\hat{a}(k)+\beta+\gamma < 0$, we have the infection rate plus the recovery rate ($\beta+\gamma$) is positive but smaller than the mobility effect $\kappa\hat{a}(k)$, meaning the mobility effect is stronger. The result is that the second moment of the infected population converges to  $0$ as time $t$ approaches infinity, meaning if the first moment also converges to  $0$ then the variance is $0$. Another notable event is when $\beta=\gamma$, meaning the infection rate is equal to the recovery rate. For the case when $\beta=\gamma$ and the mobility effect $\kappa\hat{a}(k) < 0$, the second moment of the infected population at location $x$ converges to  to $0$ as time $t$ approaches infinity and the first moment also converges to $0$, which means there is no variance and the location no longer has an infected population. The event where $\beta=\gamma$ and the mobility effect $\kappa\hat{a}(k) = 0$, we have that the second moment of the infected population at location $x$ converges to  a finite constant $C_{3}$ as $t$ approaches to infinity. This means that the second moment of the infected population converges to a steady state and the first moment of the infected group converges to  a constant, therefore the variance is fixed and the Central Limit Theorem applies and the infected population follows a normal distribution.

\section{Intermittency Analysis}

We now turn to the study of the \textit{intermittency effect} in our SIR model. Intermittency occurs when a system shows high-intensity activity in small, isolated regions, known as \textit{intermittent islands}, which carry most of the system’s total mass (\citep{carmona1994parabolic,8,konig2016parabolic}). Molchanov defines intermittency as the tendency for the ratio of second moments to the square of first moments to approach infinity over time in specific areas (\citep{8}). \citep{konig2016parabolic} further describes intermittency as large fluctuations in a field, leading to clusters with high concentrations of particles, separated by regions of low activity.

In physics, intermittency is commonly observed in \textit{turbulence}, where energy is concentrated in small regions of space and time, resulting in intense, sporadic bursts of activity. A similar phenomenon is seen in \textit{optics}, where intermittent light flashes, known as \textit{rogue waves}, occur in nonlinear media. In \textit{plasma physics}, concentrated regions of energy and particle density intermittently form within turbulent plasmas.

In our SIR model, intermittency within the infected population suggests the formation of clusters of infected individuals rather than uniform mixing. To quantify this, we define $m_2^I(t,x,y) = E[I(t,x)I(t,y)]$, representing the second moment of the infected population. The following criteria indicate clustering (intermittency):
\begin{itemize}
    \item For $x = y$, $\lim_{t \rightarrow \infty} \frac{m_2^I(t,x,x)}{(m_1^I(t,x))^2} = \infty$,  it means that the infected population becomes highly concentrated at specific, isolated points, indicating clusters at single locations.
    \item For $x \neq y$, $\lim_{t \rightarrow \infty} \frac{m_2^I(t,x,y)}{m_1^I(t,x)m_1^I(t,y)} = \infty$, it implies correlations between infected populations across distinct locations, indicating clustering over multiple, spread-out areas.
\end{itemize}

We consider four scenarios: homogeneous and inhomogeneous spaces for both $x = y$ and $x \neq y$. Intermittency in these cases reveals whether infection clusters develop and how infected individuals spatially distribute over time.

\textbf{When $x=y$ in Homogeneous Space:}\\
Substituting the equations from Theorem \ref{thm:first moment homogeneous space} and Theorem \ref{thm: 2nd moment} into $\displaystyle \frac{m_{2}^{I}(t,x,x)}{m_{1}^{I}(t,x)^2}$ and simplifying the equation we get
\begin{center}
$\displaystyle \frac{m_{2}^{I}(t,x,x)}{(m_{1}^{I}(t,x))^{2}} = \rho_{0} + \Big(\displaystyle \frac{\beta + \gamma + 2\kappa}{-\beta + \gamma}\Big) \Big[ e^{\displaystyle -(\beta-\gamma)t} - 1 \Big]$.
\end{center}

If $\beta < \gamma$, $\lim\limits_{t \rightarrow \infty} e^{\displaystyle -(\beta-\gamma)t} - 1 \rightarrow \infty$, thus $\lim\limits_{t \rightarrow \infty} \displaystyle \frac{m_{2}^{I}(t,x,x)}{(m_{1}^{I}(t,x))^{2}} \rightarrow \infty$.\\

If $\beta > \gamma$,  then $\lim\limits_{t \rightarrow \infty} \displaystyle \frac{m_{2}^{I}(t,x,x)}{(m_{1}^{I}(t,x))^{2}} \rightarrow E_{1}$ where $E_{1} = \rho_{0} - \Big(\displaystyle \frac{\beta + \gamma + 2\kappa}{-\beta + \gamma}\Big)$.\\

If $\beta=\gamma$, then $\lim\limits_{t \rightarrow \infty} \displaystyle \frac{m_{2}^{I}(t,x,x)}{(m_{1}^{I}(t,x))^{2}} \rightarrow \infty$.\\

\textbf{When $x \neq y$ in Homogeneous Space:}\\
Substituting the equations from Theorems \ref{thm:first moment homogeneous space} and Theorem \ref{thm: 2nd moment} into $\displaystyle \frac{m_{2}^{I}(t,x,y)}{m_{1}^{I}(t,x)m_{1}^{I}(t,y)}$ and simplifying the equation we get
\begin{center}
$\displaystyle \frac{m_{2}^{I}(t,x,y)}{m_{1}^{I}(t,x)m_{1}^{I}(t,y)} = \rho_{0} + \Big(\displaystyle \frac{2\kappa a(v)}{\beta - \gamma}\Big) \Big[ e^{\displaystyle -(\beta-\gamma)t} - 1 \Big]$
\end{center}
where $v=\|x-y\|$.
Denote $ E_{2} = \rho_{0} - \Big(\displaystyle \frac{ 2\kappa a(v)}{\beta - \gamma}\Big)$ and the intermittency analysis following a similar process as the case when $x=y$. The results of the analysis are summarized in Table \ref{tab:intermittency_analysis_merged}.

In the inhomogeneous space, we note that the transition probability $p(t, x, y)$ is given by
$$
p(t, x, y)=\frac{1}{(2 \pi)^d} \int_{T^d} e^{i k(x-y)} e^{\kappa \hat{\mathcal{L}}(k) t} d k=\frac{1}{(2 \pi)^d} \int_{T^d} \cos (k(x-y)) \hat{p}(t, 0, k) d k
$$

Since $\cos (k(x-y)) \leq 1$ for all $k$, it follows that
$$
p(t, x, y) \leq \frac{1}{(2 \pi)^d} \int_{T^d} \hat{p}(t, 0, k) d k=p(t, 0,0)
$$

Thus, we have $p(t, x, y) \leq p(t, 0,0)$.

In the large deviation analysis by \citep{19}, limit theorems on the asymptotic behavior of transition probabilities were established. Specifically, as $t \rightarrow \infty$, they proved that
$$
p(t, 0,0)=\frac{E_3}{t^{d / 2}}+o\left(t^{-d / 2}\right) \rightarrow 0
$$
where $E_3 \in \mathbb{R}$ is a constant. This result indicates that $p(t, 0,0)$ decays to zero as time progresses, which has significant implications for the analysis of intermittency in our model.

\textbf{When $x=y$ in Inhomogeneous Space:}\\
Recall $m_{1}^{I}(t,x)$ from Theorem \ref{thm: first moment inhomogeneous space} and $m_{2}^{I}(t,x,x)$ from Theorem \ref{thm: 2nd moment} into and substituting into the equation $\displaystyle \frac{m_{2}^{I}(t,x,x)}{m_{1}^{I}(t,x)^2}$ we get
\begin{center}
\begin{align*}
\displaystyle \frac{m_{2}^{I}(t,x,x)}{(m_{1}^{I}(t,x))^{2}} &=
\frac{\rho_{0}}{p(t,0,x)} + \Big(\frac{\kappa (e^{ 3(\beta-\gamma)t}-1)}{3(\beta-\gamma)(p(t,0,x))^{2}}\Big) + \Big( \frac{\beta+\gamma+2\kappa}{3(\beta-\gamma)p(t,0,x)}\Big)\Big[e^{ 3(\beta-\gamma)t} -1 \Big]\\
& \ge
\frac{\rho_{0}}{p(t,0,0)} + \Big(\frac{\kappa(e^{ 3(\beta-\gamma)t}-1)}{3(\beta-\gamma)(p(t,0,0))^{2}}\Big)  + \Big( \frac{\beta+\gamma+2\kappa}{3(\beta-\gamma)p(t,0,0)}\Big) \cdot
\Big[e^{ 3(\beta-\gamma)t} -1 \Big].
\end{align*}
\end{center}

\textbf{When $x \neq y$ in Inhomogeneous Space:}\\
Using the solutions $m_{1}^{I}(t,x) = e^{\displaystyle (\beta-\gamma)t} p(t,0,x)$\\ and $m_{1}^{I}(t,y) = e^{\displaystyle (\beta-\gamma)t} p(t,0,y)$ from Theorem \ref{thm: first moment inhomogeneous space} and $m_{2}^{I}(t,x,y)$ from Theorem \ref{thm: 2nd moment}, we have that
\begin{align*}
\displaystyle \frac{m_{2}^{I}(t,x,y)}{m_{1}^{I}(t,x)m_{1}^{I}(t,y)}& = \displaystyle \frac{\rho_{0}}{p(t,0,y)} - \displaystyle \Big(\frac{2\kappa a(v)}{3(\beta-\gamma)p(t,0,y)}\Big) \Big[e^{\displaystyle 3(\beta-\gamma)t}-1 \Big] \\
&\geq \displaystyle \frac{\rho_{0}}{p(t,0,0)} - \displaystyle \Big(\frac{2\kappa a(v)}{3(\beta-\gamma)p(t,0,0)}\Big) \Big[e^{\displaystyle 3(\beta-\gamma)t}-1 \Big].
\end{align*}

As $t \rightarrow \infty$, the term $p(t, 0,0) \rightarrow 0$, causing denominators involving $p(t, 0,0)$ to approach zero and the expression $\frac{m_2^I(t, x, x)}{\left(m_1^I(t, x)\right)^2}$ to tend toward infinity. This unbounded growth indicates that the variance of the infected population at location $x$ becomes significantly larger than the square of its mean, signifying the presence of intermittency—substantial fluctuations relative to the mean leading to infection clusters at specific locations.

Thus, the asymptotic behavior of $p(t, 0,0)$ critically influences the higher moments of the infected population, underscoring the pivotal role of mobility in the spread and clustering of infections in inhomogeneous spaces. While $p(t, 0,0)$ diminishes over time, reflecting dispersal, the increasing ratio highlights the tendency of infections to concentrate despite this dispersion, offering valuable insights into the mechanisms driving epidemic clustering in spatially variable environments.

\begin{table}[ht]
\centering
\begin{tabular}{ | c | c | c | c | c | }
\hline
\textbf{Space Type} & \textbf{Condition} & $\displaystyle \lim_{t \rightarrow \infty} \frac{m_{2}^{I}(t,x,x)}{(m_{1}^{I}(t,x))^{2}}$ & $\displaystyle \lim_{t \rightarrow \infty} \frac{m_{2}^{I}(t,x,y)}{m_{1}^{I}(t,x)m_{1}^{I}(t,y)}$ & \textbf{Result} \\
\hline
\multirow{3}{*}{Inhomogeneous} & $\beta < \gamma$ &  $\rightarrow \infty$ & $\rightarrow \infty$ & Intermittency \\ 
& $\beta = \gamma$ & $\rightarrow \infty$ & $\rightarrow \infty$ & Intermittency\\
& $\beta >  \gamma$ & $\rightarrow \infty$ & $\rightarrow \infty$ & Intermittency \\
\hline
\multirow{2}{*}{Homogeneous} & $\beta \leq \gamma$ &  $\rightarrow \infty$ & $\rightarrow \infty$ & Intermittency \\ 
& $\beta >  \gamma$ & $\rightarrow E_{1} < \infty$ & $\rightarrow E_{2} < \infty$ & No Intermittency \\
\hline
\end{tabular}
\caption{Intermittency Analysis in Homogeneous and Inhomogeneous Spaces.}
\label{tab:intermittency_analysis_merged}
\end{table}

In homogeneous space, the dynamics of infection spread depend significantly on the relationship between the infection rate ($\beta$) and recovery rate ($\gamma$). When $\beta>\gamma$, the infection spreads uniformly across the population, without forming clusters, as the infection dominates. In contrast, when $\beta\leq \gamma$, intermittency occurs due to random variations in individual movements. Over time, this leads to the formation of clusters where infections persist longer in certain regions due to random factors such as travel patterns or local interactions. This reflects early epidemic behavior where localized outbreaks emerge.

In inhomogeneous space, spatial variability plays a crucial role in shaping infection dynamics. Geographic and demographic differences cause infections to spread more rapidly in some regions while remaining contained in others, even when $\gamma > \beta$. Clustering or intermittency can still occur regardless of the relationship between $\beta$ and $\gamma$, reflecting real-world scenarios. The interplay of infection rate, recovery rate, and mobility patterns contributes to the formation of infection clusters. In real-world scenarios, urban areas with high population density and frequent movement may experience localized outbreaks, while rural areas with limited movement may see isolated, persistent infections. This pattern was evident in past pandemics, where certain regions were heavily impacted, while others were shielded by local policies, infrastructure, or natural barriers.

The analysis of intermittency in both homogeneous and inhomogeneous spaces offers valuable insights into how spatial movements influence the spread of infectious diseases. This has practical implications for public health interventions, where understanding how clusters form and persist can help in targeting high-risk areas with more focused measures, such as localized lockdowns, vaccination drives, or resource allocation.

\section{Discussion}

In this paper, we introduced a novel SIR model incorporating mobility, allowing particles to move spatially within the susceptible, infected, and recovered groups along with the usual inter-compartmental transitions. By analyzing the first moments, we found that under certain conditions, the infected population reaches a steady state rather than vanishing, which contrasts with the classical SIR model. This steady state arises due to continuous movement between locations, ensuring the infected population does not necessarily diminish when $\beta = \gamma$. In our mobility-enhanced model, both local and long-distance movements governed by the mobility kernel can lead to different outcomes, which are determined by the newly defined basic reproduction number $R_0^m$. In homogeneous space, $R_0^m$ is reduced to the classical reproduction number $R_0=\displaystyle\frac{\beta}{\gamma}$. However, in inhomogeneous space, we encounter a more interesting case, where $R_0^m=\displaystyle\frac{\kappa \hat{a}(k)+\beta}{\gamma}=R_0+\frac{\kappa \hat{a}(k)}{\gamma}$, with $\hat{a}(k)$ being the Fourier transform of the mobility kernel $a(z)$. In both homogeneous and inhomogeneous spaces, if $R_0^m=1$, the infected population will reach a steady state in the long term. If $R_0^m<1$, the infected population will eventually vanish. When $R_0^m>1$: (1) In homogeneous space, the infected population will grow exponentially. (2) In inhomogeneous space, an outbreak will occur and spread across locations if the mobility measure $|\kappa \hat{a}|>0$. However, if the mobility measure is zero ($|\kappa \hat{a}|=0$, i.e., $R_0^m=R_0$), indicating no mobility, the outbreak will be confined to the initial location, and other locations will remain unaffected. This analysis highlights the critical role of mobility in the spread of infections in inhomogeneous spaces, which are more akin to our everyday environment. The basic reproduction number $R_0^m$ determines the potential for an outbreak, but the mobility effect strength $|\kappa \hat{a}|$ influences whether the infection spreads to other locations or remains isolated. This framework allows for the possibility of isolating outbreaks or facilitating their spread across regions, which is a feature not accounted for in the classical SIR model.

Furthermore, the intermittency phenomenon, analyzed through the second moment, demonstrates that infection clusters form under specific conditions. We observed that intermittency occurs when $\beta < \gamma$ and $\beta = \gamma$ in homogeneous space, and in all cases  in inhomogeneous space. This provides a mathematical explanation for understanding the clustering behavior seen during pandemics, such as the higher concentration of infections in large cities like Los Angeles and New York City during COVID-19. 

The mobility-based general nonlocal operator in our model opens up numerous possibilities for future research. First, the current assumption that healthy and infected individuals have the same mobility does not always reflect reality, as infected individuals often experience impaired mobility. Future models could account for differing mobility rates, increasing the accuracy of real-world epidemic simulations. Another extension involves generalizing the kernel $a(z)$ to study various movement types, such as asymmetric and various types of distributions, to represent both local and long-distance movements more realistically. and estimate $a(z)$ by the real data.

Additionally, expanding this model from discrete spatial settings like $\mathbb{Z}^d$ to continuous spaces $\mathbb{R}^d$ would enable more complex modeling of population spread. In $\mathbb{R}^d$, spatial movement could be modeled through diffusion processes represented by partial differential equations (PDEs), such as reaction-diffusion systems. These systems would more accurately capture large-scale epidemics by considering both local and nonlocal movements and addressing challenges like population density variations and geographic barriers.

\backmatter

\section*{Declarations}
\begin{itemize}
\item Funding

Dan Han was supposed by University of Louisville EVPRI grant ``Spatial Population Dynamics with Disease" and AMS Research Communities ``Survival Dynamics for Contact Process with Quarantine".

\item Data availability 

Data sharing not applicable to this article as no datasets were generated or analysed during
the current study.

\item Author Contribution

Ciana Applegate: Writing - review \& editing, initial draft, formal analysis.\\
Dan Han: Writing - review \& editing, original draft, formal analysis, supervision, conceptualization.\\
Jiaxu Li: Writing - review \& editing.
\end{itemize}

\section{Compliance with Ethical Standards}
\begin{itemize}
    \item Disclosure of potential conflicts of interest.
    
    The authors declare that they have no conflict of interest or competing interests relevant to this research.
\item Research involving Human Participants and/or Animals

This article does not contain any studies or data involving human participants and animals performed by any of the authors.

\end{itemize}







\bibliography{mybib}

\begin{thebibliography}{47}
\providecommand{\natexlab}[1]{#1}
\providecommand{\url}[1]{{#1}}
\providecommand{\urlprefix}{URL }
\providecommand{\doi}[1]{\url{https://doi.org/#1}}
\providecommand{\eprint}[2][]{\url{#2}}
 \bibcommenthead

\bibitem[{A.~Getan and Veinberg(2017)}]{8}
A.~Getan SM, Veinberg B (2017) Intermittency for branching walks with heavy
  tails. Stochastics and Dynamics 17

\bibitem[{Abhishek(2021)}]{abhishek2021epidemic}
Abhishek V (2021) Epidemic Models Under Mobility on Multi-Layer Networks.
  Michigan State University, Michigan

\bibitem[{Ahmed et~al(2012)Ahmed, Witbooi, and Patidar}]{ahmed2012modeling}
Ahmed IH, Witbooi PJ, Patidar K (2012) Modeling the dynamics of an epidemic
  under vaccination in two interacting populations. Journal of applied
  mathematics 2012(1):275902

\bibitem[{Akuno et~al(2023)Akuno, Ram{\'\i}rez-Ram{\'\i}rez, Mehta,
  Krishnanunni, Bui-Thanh, and Montoya}]{akuno2023multi}
Akuno AO, Ram{\'\i}rez-Ram{\'\i}rez LL, Mehta C, et~al (2023) Multi-patch
  epidemic models with partial mobility, residency, and demography. Chaos,
  Solitons \& Fractals 173:113690

\bibitem[{Allen(1994)}]{allen1994}
Allen L (1994) Some discrete-time si, sir, and sis epidemic models.
  Mathematical Biosciences 124:83--105

\bibitem[{Allen(2008{\natexlab{a}})}]{allen2008}
Allen L (2008{\natexlab{a}}) An introduction to stochastic epidemic models. In:
  Mathematical Epidemiology. p 81--130

\bibitem[{Allen(2008{\natexlab{b}})}]{allen2008introduction}
Allen LJS (2008{\natexlab{b}}) An Introduction to Stochastic Processes with
  Applications to Biology. Chapman and Hall/CRC, Boca Raton

\bibitem[{Ammi et~al(2023)Ammi, Zinihi, Raezah, and Sabbar}]{ammi2023optimal}
Ammi MRS, Zinihi A, Raezah AA, et~al (2023) Optimal control of a spatiotemporal
  sir model with reaction--diffusion involving p-laplacian operator. Results in
  Physics 52:106895

\bibitem[{Anderson and May(1991)}]{anderson1991infectious}
Anderson RM, May RM (1991) Infectious Diseases of Humans: Dynamics and Control.
  Oxford University Press

\bibitem[{Balcan et~al(2009)Balcan, Colizza, Gon{\c{c}}alves, Hu, Ramasco, and
  Vespignani}]{balcan2009multiscale}
Balcan D, Colizza V, Gon{\c{c}}alves B, et~al (2009) Multiscale mobility
  networks and the spatial spreading of infectious diseases. Proceedings of the
  National Academy of Sciences 106(51):21484--21489

\bibitem[{Bichara et~al(2015)Bichara, Kang, Castillo-Chavez, Horan, and
  Perrings}]{bichara2015sis}
Bichara D, Kang Y, Castillo-Chavez C, et~al (2015) Sis and sir epidemic models
  under virtual dispersal. Bulletin of mathematical biology 77:2004--2034

\bibitem[{Bonabeau(2002)}]{bonabeau2002agent}
Bonabeau E (2002) Agent-based modeling: Methods and techniques for simulating
  human systems. Proceedings of the National Academy of Sciences 99(suppl
  3):7280--7287

\bibitem[{Caraballo and Keraani(2018)}]{caraballo2018analysis}
Caraballo T, Keraani S (2018) Analysis of a stochastic sir model with
  fractional brownian motion. Stochastic analysis and applications
  36(5):895--908

\bibitem[{Carmona and Molchanov(1994)}]{carmona1994parabolic}
Carmona R, Molchanov SA (1994) Parabolic Anderson problem and intermittency,
  vol 518. American Mathematical Soc., Providence, RI.

\bibitem[{Centres et~al(2024)Centres, Perez-Morelo, Guzman, Reinaudi, and
  Gimenez}]{centres2024diffusion}
Centres PM, Perez-Morelo DJ, Guzman R, et~al (2024) Diffusion model for the
  spread of infectious diseases: Sir model with mobile agents. Physica A:
  Statistical Mechanics and its Applications 633:129399

\bibitem[{Chen et~al(2014)Chen, Yan, and Xiang}]{chen2014transmission}
Chen Y, Yan M, Xiang Z (2014) Transmission dynamics of a two-city sir epidemic
  model with transport-related infections. Journal of Applied Mathematics
  2014(1):764278

\bibitem[{Chinviriyasit and Chinviriyasit(2010)}]{chinviriyasit2010numerical}
Chinviriyasit S, Chinviriyasit W (2010) Numerical modelling of an sir epidemic
  model with diffusion. Applied Mathematics and Computation 216(2):395--409

\bibitem[{Citron et~al(2021)Citron, Guerra, Dolgert, Wu, Henry, S{\'a}nchez~C,
  and Smith}]{citron2021comparing}
Citron DT, Guerra CA, Dolgert AJ, et~al (2021) Comparing metapopulation
  dynamics of infectious diseases under different models of human movement.
  Proceedings of the National Academy of Sciences 118(18):e2007488118

\bibitem[{Colizza et~al(2007)Colizza, Barrat, Barth{\'e}lemy, and
  Vespignani}]{colizza2007modeling}
Colizza V, Barrat A, Barth{\'e}lemy M, et~al (2007) Modeling the worldwide
  spread of pandemic influenza: Baseline case and containment interventions.
  PLoS medicine 4(1):e13

\bibitem[{Ding et~al(2012)Ding, Wang, and Ding}]{ding2012global}
Ding D, Wang X, Ding X (2012) Global stability of multigroup dengue disease
  transmission model. Journal of Applied Mathematics 2012(1):342472

\bibitem[{Epstein(2009)}]{epstein2009modeling}
Epstein JM (2009) Modelling to contain pandemics. Nature 460:687

\bibitem[{Faranda and Alberti(2020)}]{faranda2020modeling}
Faranda D, Alberti T (2020) Modeling the second wave of covid-19 infections in
  france and italy via a stochastic seir model. Chaos: An Interdisciplinary
  Journal of Nonlinear Science 30(11)

\bibitem[{Gai et~al(2020)Gai, Iron, and Kolokolnikov}]{gai2020localized}
Gai C, Iron D, Kolokolnikov T (2020) Localized outbreaks in an sir model with
  diffusion. Journal of Mathematical Biology 80(5):1389--1411

\bibitem[{Hisi et~al(2019)Hisi, Macau, and Tizei}]{hisi2019role}
Hisi ANS, Macau EEN, Tizei LHG (2019) The role of mobility in epidemic
  dynamics. Physica A: Statistical Mechanics and its Applications 526:120663

\bibitem[{Jewell et~al(2009)}]{jewell2009bayesian}
Jewell CP, et~al (2009) Bayesian analysis for emerging infectious diseases.
  PLoS Computational Biology 5(3):e1000258

\bibitem[{Keeling and Rohani(2007)}]{keeling2007modeling}
Keeling MJ, Rohani P (2007) Modeling Infectious Diseases in Humans and Animals.
  Princeton University Press, Princeton

\bibitem[{Kermack and McKendrick(1927)}]{kermack1927contribution}
Kermack WO, McKendrick AG (1927) A contribution to the mathematical theory of
  epidemics. Proceedings of the Royal Society of London Series A, Containing
  Papers of a Mathematical and Physical Character 115(772):700--721

\bibitem[{K{\"o}nig(2016)}]{konig2016parabolic}
K{\"o}nig W (2016) The parabolic Anderson model: Random walk in random
  potential. Birkh{\"a}user, Basel.

\bibitem[{Liggett(2013)}]{Liggett2013}
Liggett TM (2013) Stochastic Interacting Systems: Contact, Voter, and Exclusion
  Processes, Grundlehren der mathematischen Wissenschaften, vol 324. Springer
  Science \& Business Media, Berlin

\bibitem[{Molchanov and Yarovaya(2013)}]{19}
Molchanov S, Yarovaya E (2013) Large deviations for a symmetric branching
  random walk on a multidimensional lattice. Proceedings of the Steklov
  Institute of Mathematics 282:186--201

\bibitem[{Newman(2002)}]{newman2002spread}
Newman MEJ (2002) Spread of epidemic disease on networks. Physical Review E
  66(1):016128

\bibitem[{Nipa and Allen(2020)}]{nipa2020disease}
Nipa KF, Allen LJ (2020) Disease emergence in multi-patch stochastic epidemic
  models with demographic and seasonal variability. Bulletin of Mathematical
  Biology 82(12):152

\bibitem[{O'Neill and Roberts(1999)}]{oneill1999bayesian}
O'Neill PD, Roberts GO (1999) Bayesian inference for partially observed
  stochastic epidemics. Journal of the Royal Statistical Society: Series A
  (Statistics in Society) 162(1):121--129

\bibitem[{Paoluzzi et~al(2021)Paoluzzi, Gnan, Grassi, Salvetti, Vanacore, and
  Crisanti}]{paoluzzi2021single}
Paoluzzi M, Gnan N, Grassi F, et~al (2021) A single-agent extension of the sir
  model describes the impact of mobility restrictions on the covid-19 epidemic.
  Scientific Reports 11(1):24467

\bibitem[{Pastor-Satorras and Vespignani(2001)}]{pastor2001epidemic}
Pastor-Satorras R, Vespignani A (2001) Epidemic spreading in scale-free
  networks. Physical Review Letters 86(14):3200

\bibitem[{Pastor-Satorras and Vespignani(2014)}]{pastor2014hybrid}
Pastor-Satorras R, Vespignani A (2014) Hybrid sir epidemic model for
  heterogeneous populations. Journal of Complex Networks pp 1--20.
  \doi{10.1093/comnet/cnu009}

\bibitem[{Perez and Dragicevic(2009)}]{perez2009agent}
Perez L, Dragicevic S (2009) An agent-based approach for modeling dynamics of
  contagious disease spread. International Journal of Health Geographics
  8:1--17

\bibitem[{Small and Tse(2015)}]{small2015stochastic}
Small M, Tse CK (2015) A stochastic hybrid sir model for understanding the
  spread of infectious diseases. IEEE Transactions on Biomedical Engineering pp
  1--12. \doi{10.1109/TBME.2014.2364694}

\bibitem[{Strano et~al(2017)Strano, Paoletti, and Cardillo}]{strano2017hybrid}
Strano E, Paoletti M, Cardillo A (2017) A hybrid agent-based and sir model for
  simulating the spread of influenza. International Journal of Simulation and
  Process Modelling pp 15--22. \doi{10.1504/IJSPM.2017.10002135}

\bibitem[{Wang and Mulone(2003)}]{wang2003threshold}
Wang W, Mulone G (2003) Threshold of disease transmission in a patch
  environment. Journal of Mathematical Analysis and Applications
  285(1):321--335

\bibitem[{Weiss(2013)}]{24}
Weiss H (2013) The sir model and the foundations of public health. Materials
  Mathematics pp 1--17

\bibitem[{White et~al(2016)White, Purdy, and Herd}]{white2016hybrid}
White AK, Purdy EK, Herd KM (2016) A hybrid sir agent-based model for modeling
  infectious disease transmission in social networks. Journal of Artificial
  Societies and Social Simulation pp 1--12. \doi{10.18564/jasss.3319}

\bibitem[{Wu et~al(2018)Wu, Chen, and Xu}]{wu2018hybrid}
Wu J, Chen X, Xu J (2018) A hybrid sir model for epidemic prediction. Journal
  of Theoretical Biology pp 1--10. \doi{10.1016/j.jtbi.2018.01.012}

\bibitem[{Xie et~al(2023)Xie, Li, Feng, and Kurths}]{xie2023contact}
Xie M, Li Y, Feng M, et~al (2023) Contact-dependent infection and mobility in
  the metapopulation sir model from a birth--death process perspective. Chaos,
  Solitons \& Fractals 177:114299

\bibitem[{Yarovaya(2012)}]{25}
Yarovaya E (2012) Spectral properties of evolutionary operators in branching
  random walk models. Mathematical Notes 92:115--131

\bibitem[{Yarovaya(2019)}]{28}
Yarovaya E (2019) Operator equations of branching random walks. Methodology and
  Computing in Applied Probability 21:1007--1021

\bibitem[{Zhao et~al(2019)Zhao, Wang, and Liu}]{zhao2019hybrid}
Zhao M, Wang W, Liu Y (2019) Hybrid sir model with network dynamics for
  predicting epidemic spread. Physica A: Statistical Mechanics and its
  Applications pp 1--14. \doi{10.1016/j.physa.2019.02.002}

\end{thebibliography}

\end{document}